\documentclass{amsart}
\usepackage[T1]{fontenc}
\usepackage[utf8]{inputenc}
\usepackage{amsmath}
\usepackage{amssymb}
\usepackage{caption}
\usepackage{amsthm}
\usepackage[hidelinks,colorlinks=true,citecolor=magenta,linkcolor=cyan]{hyperref}
\usepackage{cleveref}
\usepackage{hyperref}
\usepackage{nicefrac}
\usepackage[inline]{enumitem}
\usepackage[bb = fourier,cal = euler,scr = rsfs]{mathalfa}
\usepackage{mathabx}
\usepackage{enumitem}
\usepackage[svgnames,table]{xcolor}

\usepackage{amssymb}
\usepackage{mathrsfs}

\usepackage{array,float}

\usepackage{tikz}
\usetikzlibrary{shapes,arrows}
\usetikzlibrary{fit,positioning}
\usetikzlibrary{patterns,decorations.pathreplacing}
\usepackage{xkeyval}
\usepackage{moreverb}
\usepackage{epic}
\usepgfmodule{shapes,plot,decorations}
\usepackage{booktabs} 
\usepackage[normalem]{ulem}

\usepackage[warn]{mathtext}

\newtheorem{theorem}{Theorem}[section]
\newtheorem{corollary}[theorem]{Corollary}
\newtheorem{fact}[theorem]{Fact}
\newtheorem{proposition}[theorem]{Proposition}
\newtheorem{claim}{Claim}

\newtheorem{question}[theorem]{Question}

\theoremstyle{remark}
\newtheorem{remark}[theorem]{Remark}

\theoremstyle{remark}
\newtheorem{example}[theorem]{Example}

\title{On topologically transitive subsets of flag manifolds}

\author{Subhadip Dey}
\address{Tata Institute of Fundamental Research, School of Mathematics, Homi Bhabha Road, Colaba, Mumbai 400005, India}
\email{subhadip@math.tifr.res.in}

\author{Sami Douba}
\address{Mathematisches Institut der Universit\"at Bonn, Endenicher Allee 60, 53115 Bonn, Germany}
\email{douba@math.uni-bonn.de}

\author{Konstantinos Tsouvalas}
\address{Max Planck Institute for Mathematics in the Sciences, Inselstraße 22, 04103 Leipzig, Germany}
\email{konstantinos.tsouvalas@mis.mpg.de}

\usepackage{chngcntr}
\usepackage{graphicx} 
\usepackage{float}
\counterwithout{equation}{section}
\counterwithout{theorem}{section}

\begin{document}

\begin{abstract}
We discuss some contexts in which the topological dynamics of certain Anosov subgroups of higher-rank Lie groups on ``bad'' subsets of flag manifolds can be analyzed using results from homogeneous dynamics in the infinite-covolume rank-one setting. For example, we show that a group of projective transformations dividing a strictly convex domain in projective space and intersecting Zariski-densely the stabilizer of an ellipsoid has a dense orbit in the complement of the domain, and acts minimally on the space of full projective flags tangent to the domain. This accounts for all known examples of divisible strictly convex domains in sufficiently high dimensions. We also provide examples of Zariski-dense groups that are Anosov in a partial flag manifold but such that the equivariant projection from the Benoist--Guivarc'h limit set in the Furstenberg boundary to the Anosov limit set is not a fibration.
\end{abstract}

\maketitle

A nonempty open subset $\Omega$ of the $n$-dimensional projective space $\mathbb{RP}^n$ is said to be {\em properly convex} if the closure $\overline{\Omega}$ is contained in an affine chart of $\mathbb{RP}^n$. If $\overline{\Omega}$ is moreover strictly convex when viewed in that affine chart, one says $\Omega$ is {\em strictly convex}.
Following Benz\'ecri~\cite{MR124005}, a properly convex domain $\Omega \subset \mathbb{RP}^n$ is {\em divisible} if $\Omega$ is invariant under a subgroup $\Gamma < \mathrm{SL}_{n+1}(\mathbb{R})$ that acts properly and cocompactly on~$\Omega$, in which case $\Gamma$ is said to {\em divide} $\Omega$. It was proved by Benoist~\cite{MR2094116} that a divisible properly convex domain $\Omega$ is strictly convex if and only if $\partial \Omega$ has~$\mathcal{C}^1$ regularity, and that each of the latter two conditions is in turn equivalent to Gromov-hyperbolicity of any dividing group $\Gamma$. The action of $\Gamma$ on $\partial \Omega$ is then topologically conjugate to that of $\Gamma$ on the abstract Gromov boundary of $\Gamma$, and is in particular minimal. The present work is motivated by questions regarding the topological dynamics of $\Gamma$ on some other spaces that are naturally associated to $\Omega$.

\begin{question}\label{domain}
Suppose $\Gamma < \mathrm{SL}_{n+1}(\mathbb{R})$ divides a strictly convex domain $\Omega \subset \mathbb{RP}^n$. 
\begin{enumerate}
\item\label{denseorbitdomain} Does $\Gamma$ have a dense orbit in $\mathbb{RP}^n - \overline{\Omega}$? 
\item\label{minimalitydomain} Does $\Gamma$ act minimally on the space of all complete flags of projective subspaces $\{x\} = V_0 \subset V_1 \subset \cdots \subset V_{n-1}\subset \mathbb{RP}^n$ such that $x \in \partial \Omega$ and the $V_i$ are all tangent to $\partial\Omega$?
\end{enumerate}
\end{question}

In the case that $\Omega$ is an ellipsoid, it follows from Moore's ergodicity theorem~\cite{zbMATH03239991} that the action of $\Gamma$ on each of the spaces in Question~\ref{domain} is ergodic with respect to the Lebesgue measure class and therefore possesses a dense orbit.\footnote{Note that, even when $\Omega$ is an ellipsoid, the action of $\Gamma$ on $\mathbb{RP}^n - \overline{\Omega}$ need not be minimal, and indeed, it follows from work of Ratner~\cite{MR1106945} and Shah~\cite{zbMATH00849271} that for $n\geq 3$ minimality is, in the ellipsoid case, equivalent to the absence of compact immersed totally geodesic hypersurfaces within the hyperbolic orbifold $\Gamma \backslash \Omega$.} Ergodicity can also be used to show that the action in Question~\ref{domain}(\ref{minimalitydomain}) is moreover minimal in this case; see~\cite[Lemma~8.5]{zbMATH03418289}. In the sequel (Corollary~\ref{domainexample}), we will answer Question~\ref{domain} affirmatively in the particular case that there is a stabilizer  $H$ in $\mathrm{SL}_{n+1}(\mathbb{R})$ of some ellipsoid in $\mathbb{RP}^n$ such that $\Gamma \cap H$ is Zariski-dense in $H$. 

Note that if $n\geq 3$ and $\Omega$ as in Question~\ref{domain} is not an ellipsoid, then even if the dividing group~$\Gamma$ is abstractly isomorphic to a group dividing an ellipsoid $\Omega_0 \subset \mathbb{RP}^n$, there will be no topological conjugacy\footnote{Such a topological conjugacy indeed exists for $n=2$ since in this case the exterior of the domain can be identified with the space of unordered pairs of distinct points in the Gromov boundary of the dividing group.} between the induced action of $\Gamma$ on $\mathbb{RP}^n - \overline{\Omega_0}$ and the original action on $\mathbb{RP}^n - \overline{\Omega}$. The lack of such a conjugacy can easily be seen as follows: by a result of Benoist~\cite[Th\'eor\`eme~3.6]{Benoist-cones}, a subgroup of $\mathrm{SL}_{n+1}(\mathbb{R})$ dividing a strictly convex domain in $\mathbb{RP}^n$ is Zariski-dense in $\mathrm{SL}_{n+1}(\mathbb{R})$ as soon as the domain is not an ellipsoid. Now, by the appendix of~\cite{MR1198811}, every Zariski-dense subgroup of $\mathrm{SL}_{n+1}(\mathbb{R})$ contains elements all of whose nontrivial powers have precisely $n+1$ fixed points in $\mathbb{RP}^n$. On the other hand, there is no such element of $\mathrm{SL}_{n+1}(\mathbb{R})$ preserving an ellipsoid in $\mathbb{RP}^n$ if $n\geq 3$. We remark moreover that, as demonstrated by Benoist~\cite[Proposition~3.1]{MR2295544} for $n=4$ and by Kapovich~\cite{zbMATH05220954} for arbitrary $n \geq 4$, there are subgroups of $\mathrm{SL}_{n+1}(\mathbb{R})$ dividing strictly convex domains in $\mathbb{RP}^n$ that are not abstractly commensurable to groups dividing ellipsoids.

 We may recast Question~\ref{domain} in the following more general framework using language of Kapovich--Leeb--Porti~\cite{MR3720343}.
 Let $G$ be a noncompact semisimple real algebraic group. Given a proper parabolic subgroup $Q < G$, a reflexive proper parabolic subgroup $Q' < G$, a closed subset $\Lambda \subset G/Q'$, and an element $w \in Q'\backslash G/Q$, denote by $w(\Lambda) \subset G/Q$ the set of all $\varphi \in G/Q$ such that $\varphi$ is in position $w$ relative to some point of $\Lambda$. If $\Lambda$ is $Q'$-antipodal and $w$ is contained in a slim ideal of $Q'\backslash G/Q$, then the map $w(\Lambda) \rightarrow \Lambda$ that assigns to each $\varphi \in w(\Lambda)$ the unique point in $\Lambda$ relative to which $\varphi$ is in position $w$ is a continuous fibration; see~\cite[Lemma~7.4]{MR3720343}.

Now suppose we are given a nontrivial representation $\tau: \mathrm{Spin}_\circ(n,1) \rightarrow G$, $n \geq 2$, and let $H$ be the image of $\tau$ in $G$. Let $P_H=M_H A_H N_H$ be a Langlands decomposition of a proper parabolic subgroup $P_H < H$. Let $Q_\tau < G$ be a reflexive proper parabolic subgroup of $G$ that is compatible with $\tau$, so that $H \cap Q_\tau$ is a proper parabolic subgroup of $H$. Given a closed subgroup $\Gamma < G$, denote by~$\Lambda_\Gamma^\tau$ the flag limit set of $\Gamma$ in $G/Q_\tau$. If $\Lambda_\Gamma^\tau$ is $Q_\tau$-antipodal and $w \in Q_\tau \backslash G /Q$ is contained in a slim ideal of $Q_\tau \backslash G/Q$, then the fibration $w(\Lambda_\Gamma^\tau) \rightarrow \Lambda_\Gamma^\tau$ described in the previous paragraph is $\Gamma$-equivariant. Note that, under the above assumptions, we have that~$\Lambda_H^\tau$ is nothing but the image of $H$ in $G/Q_\tau$ under the quotient map $G \rightarrow G/Q_\tau$.

The following statement gives sufficient conditions for the action of a discrete subgroup $\Gamma < G$ on $w(\Lambda_\Gamma^\tau)$ to possess a dense orbit (respectively, to be minimal).

\begin{theorem}\label{thm:main}
Suppose $w \in Q_\tau \backslash G /Q$ is contained in a slim ideal of $Q_\tau \backslash G/Q$, and that $H$ acts transitively on $w(\Lambda_H^\tau)$. Let $\Gamma$ be a $Q_\tau$-regular antipodal\footnote{Such subgroups also appear in work of Canary--Zhang--Zimmer~\cite{zbMATH07924657} as {\em $Q_\tau$-transverse} subgroups.} subgroup of~$G$ such that $\Gamma \cap H$ is Zariski-dense in $H$.
\begin{enumerate}
\item\label{denseorbit} If a point stabilizer for the action of $H$ on $w(\Lambda_H^\tau)$ contains $A_H U_H$ for some unipotent one-parameter subgroup $U_H < N_H$, then $\Gamma$ has a dense orbit in~$w(\Lambda_\Gamma^\tau)$.
\item\label{minimality} If a point stabilizer for the action of $H$ on $w(\Lambda_H^\tau)$ contains $A_H N_H$, then the action of $\Gamma$ on $w(\Lambda_\Gamma^\tau)$ is minimal. 
\end{enumerate}
\end{theorem}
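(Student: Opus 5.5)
The plan is to reduce everything to the fibers over $\Lambda_\Gamma^\tau$ and then to rank-one homogeneous dynamics of $\Gamma_H := \Gamma \cap H$ inside $H$.

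\textbf{Step 1: a $\Gamma_H$-invariant subset of $H/S$.} Since $H$ acts transitively on $w(\Lambda_H^\tau)$, we may write $w(\Lambda_H^\tau) \cong H/S$, where $S$ is a point stabilizer. By hypothesis, $S$ contains $A_H U_H$ in case~(\ref{denseorbit}) and $A_H N_H$ in case~(\ref{minimality}). Since $\Lambda_H^\tau \subset \Lambda_\Gamma^\tau$ (this should follow from Zariski-density of $\Gamma_H$, as discussed below), we have $w(\Lambda_H^\tau) \subset w(\Lambda_\Gamma^\tau)$. Because $\Gamma_H$ is a discrete Zariski-dense subgroup of the rank-one group $H$, its limit set $\Lambda_{\Gamma_H} \subset H/P_H \cong \partial \mathbb{H}^n$ is a nonempty minimal set. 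The set
\[
\mathcal{E} := \{ hS \in H/S : h P_H \in \Lambda_{\Gamma_H} \}
\]
is then a closed $\Gamma_H$-invariant subset of $w(\Lambda_\Gamma^\tau)$. Equivalently, $\mathcal{E}$ is the set of $\varphi \in w(\Lambda_H^\tau)$ whose projection lies in $\Lambda_{\Gamma_H}^\tau \subset \Lambda_\Gamma^\tau$.

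\textbf{Step 2: dynamics of $\Gamma_H$ on $\mathcal{E}$.} By duality, $\Gamma_H$-orbits on $H/S$ correspond to $S$-orbits on $\Gamma_H\backslash H$. In case~(\ref{minimality}), I would invoke the rank-one, infinite-covolume result that for Zariski-dense discrete $\Gamma_H$ the $A_H N_H$-action on the closed set $\{\Gamma_H h : hP_H \in \Lambda_{\Gamma_H}\} \subset \Gamma_H\backslash H$ is minimal. Here one has to be careful with $M_H$: in the Zariski-dense case, Winter's mixing gives $M_H$-ergodicity. Hence every $\Gamma_H$-orbit in $\mathcal{E}$ is dense in $\mathcal{E}$. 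In case~(\ref{denseorbit}), I would instead invoke topological mixing of the frame flow (Winter) together with density of a $U_H$-orbit coming from horospherical results (e.g.\ Mohammadi--Oh, or Maucourant--Schapira for one-parameter unipotents in the convex-cocompact or geometrically finite setting). I expect to need the conclusion that some $A_H U_H$-orbit is dense in the corresponding BMS support. Since $\Gamma_H$ need not be geometrically finite, I would try to use only ergodicity of the BMS measure under $A_H U_H$, which follows from mixing of $A_H$ via a Mautner argument. This yields a $\Gamma_H$-orbit dense in $\mathcal{E}$.

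\textbf{Step 3: spreading from $\mathcal{E}$ to $w(\Lambda_\Gamma^\tau)$ via antipodality.} Let $\pi: w(\Lambda_\Gamma^\tau) \to \Lambda_\Gamma^\tau$ be the $\Gamma$-equivariant fibration. Because $\Gamma$ is $Q_\tau$-regular and antipodal, it acts on $\Lambda_\Gamma^\tau$ as a convergence group, and minimally (for nonelementary $\Gamma$). Given any $\varphi \in w(\Lambda_\Gamma^\tau)$ with $\pi(\varphi) = \xi$, choose $\gamma_k \in \Gamma$ that converge to the attracting point $\xi$ uniformly on compacta away from a repelling point $\eta$. Pick $\xi_0 \in \Lambda_{\Gamma_H}^\tau$ with $\xi_0 \neq \eta$. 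Then $\gamma_k$ maps the fiber over $\xi_0$ near the fiber over $\xi$. The difficulty is control within fibers: the fiber over $\xi_0$ contained in $\mathcal{E}$ is the whole fiber $w(\{\xi_0\})$, since $H$ acts transitively and $w(\{\xi_0\}) = \pi^{-1}(\xi_0)$ is an orbit of $Q_\tau \cap H$-type. I expect this to be the \emph{main obstacle}: one needs that the images $\gamma_k(\pi^{-1}(\xi_0))$ accumulate on all of $\pi^{-1}(\xi)$, and not just on a point of it. I would handle this using regularity: $\gamma_k$ acts on $G/Q$ with attracting set equal to the set of flags in slim positions relative to $\xi$. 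Since $\pi^{-1}(\xi_0)$ meets the complement of the repelling ``bad'' set in an open dense subset, the limit of $\gamma_k \pi^{-1}(\xi_0)$ contains the whole fiber over $\xi$. Combining this with the density statement of Step~2, the closure of a dense $\Gamma_H$-orbit in $\mathcal{E}$ contains every fiber, which proves~(\ref{denseorbit}). For~(\ref{minimality}), any orbit closure $\overline{\Gamma\varphi}$ is closed and $\Gamma$-invariant, and $\pi(\overline{\Gamma\varphi}) = \Lambda_\Gamma^\tau$ by minimality on the base. Applying the argument above in reverse, moving $\overline{\Gamma\varphi}$ to meet $\mathcal{E}$, we find $\overline{\Gamma\varphi} \cap \mathcal{E} \neq \emptyset$. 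Step~2 then gives $\mathcal{E} \subset \overline{\Gamma\varphi}$, and hence $\overline{\Gamma\varphi}$ is everything.
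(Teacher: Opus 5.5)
\textbf{Part (2) is fine, but part (1) has a genuine gap in Step~2.} Your argument for (2) is essentially the paper's. You should add that $w(\Lambda_\Gamma^\tau)$ is compact, because $A_HN_H$ is cocompact in $H$; this is what makes $\pi(\overline{\Gamma\varphi})$ closed. Then $\pi(\overline{\Gamma\varphi})=\Lambda_\Gamma^\tau$, so $\overline{\Gamma\varphi}$ meets $\mathcal{E}$ directly, and no ``reverse'' argument is needed.

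For (1), recall that in the rank-one model $\mathcal{E}$ is the full preimage of the limit set in $H/(A_HU_H)$, modulo $S$. It consists of pointed flags of spheres whose marked point lies in $\Lambda_{\Gamma_H}$, with the circle $S_1$ through that point arbitrary. The BMS support only sees frames with both endpoints in the limit set, i.e.\ flags whose circle $S_1$ passes through a second limit point. Frame-flow results (Flaminio--Spatzier for convex cocompact Zariski-dense groups) give an orbit whose closure contains this doubly-pointed set, hence its $U_H$-saturation. Nothing in your proposal shows that this saturation is dense in $\mathcal{E}$, so you never get a full fiber inside an orbit closure. This is not automatic. If the Hausdorff dimension of the limit set is $<d-1$, then for every limit point, almost every round circle through it misses the rest of the limit set.

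The paper fills exactly this step with Proposition~\ref{prop:spheres}. By Guivarc'h--Raugi, which is a second use of Zariski density, the unit tangent vectors at limit points that are tangential limits of sequences of limit points are dense over the limit set. This lets one approximate any pointed flag over $\Lambda$ by pointed flags whose $S_1$ contains two limit points.

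Your measure-theoretic substitute does not close the gap, for two reasons:
\begin{enumerate}
\item The BMS measure is not $U_H$-invariant, since its support is not $U_H$-saturated. So ``ergodicity of BMS under $A_HU_H$'' is not meaningful.
\item Mixing of the frame flow needs finite BMS measure, which fails for general $\Gamma_H$.
\end{enumerate}
The second issue is handled as in the paper: pass to a Zariski-dense convex cocompact subgroup $\Delta_0<\Gamma\cap H$. This suffices, because a single full fiber in one orbit closure is all you need.

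\textbf{The ``main obstacle'' you name in Step~3 is not an obstacle.} By equivariance, $\gamma\,\pi^{-1}(\xi_0)=\pi^{-1}(\gamma\xi_0)$. Since $\pi$ is a locally trivial fibration, it is an open map. Hence a closed $\Gamma$-invariant set containing one full fiber contains $\pi^{-1}(\Gamma\xi_0)$, which is dense because $\Gamma$ acts minimally on $\Lambda_\Gamma^\tau$. Your appeal to regularity of $\gamma_k$ on $G/Q$ is unjustified in any case, since $\Gamma$ is only assumed $Q_\tau$-regular.

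\textbf{A smaller error.} The inclusion $\Lambda_H^\tau\subset\Lambda_\Gamma^\tau$ is false in general; take $\Gamma$ to be a Zariski-dense Schottky subgroup of $H$. You only need $\Lambda_{\Gamma_H}^\tau\subset\Lambda_\Gamma^\tau$, which is immediate.
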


Examples of $Q_\tau$-regular antipodal subgroups of $G$ are the $Q_\tau$-Anosov subgroups in the sense of Labourie~\cite{MR2221137} and Guichard--Wienhard~\cite{MR2981818}. For example, sufficiently small deformations in $G$ of convex cocompact subgroups of $H$ are $Q_\tau$-Anosov; see \cite[Proposition~2.1]{MR2221137} and \cite[Theorem~1.2]{MR2981818}. If $\Gamma$ is $Q_\tau$-Anosov, then~$\Gamma$ is intrinsically Gromov-hyperbolic and $\Lambda_\Gamma^\tau$ is $\Gamma$-equivariantly homeomorphic to the Gromov boundary of $\Gamma$; depending on one's definition, these properties either feature in the definition of a $Q_\tau$-Anosov representation, or follow from~\cite[Theorem~1.4]{MR3890767}.

Note that, in the setting of Theorem~\ref{thm:main}, the condition that a point stabilizer for the action of $H$ on $w(\Lambda_H^\tau)$ contain $A_H N_H$ holds if and only if $w(\Lambda_H^\tau)$, or equivalently~$w(\Lambda_\Gamma^\tau)$, is compact. The latter in turn holds if and only if $w$ is the minimal element of $Q_\tau \backslash G / Q$. In this case, if $\Gamma$ is moreover Zariski-dense in $G$, then it follows from Theorem~\ref{thm:main}(\ref{minimality}) that $w(\Lambda_\Gamma^\tau)$ is precisely the limit set of $\Gamma$ in $G/Q$ in the sense of Guivarc'h~\cite{MR1074315} and Benoist~\cite{MR1437472}.

One immediately deduces from Theorem~\ref{thm:main} the following.

\begin{corollary}\label{domainexample}
Let $n \geq 3$. Suppose $\Gamma < \mathrm{SL}_{n+1}(\mathbb{R})$ divides a strictly convex domain $\Omega \subset \mathbb{RP}^n$, and that there is some conjugate $H$ of  $\mathrm{SO}_\circ(n,1)$ in $\mathrm{SL}_{n+1}(\mathbb{R})$ such that $\Gamma \cap H$ is Zariski-dense in $H$. For $0 \leq i \leq n-1$, let $\mathcal{E}_i$ be the space of all complete flags of projective subspaces $V_0 \subset V_1 \subset \cdots \subset V_{n-1} \subset \mathbb{RP}^n$ such that $V_j \cap \overline{\Omega} = \emptyset$ for $j < i$ and $V_j \cap \overline{\Omega}$ is a singleton for $j \geq i$.  
\begin{enumerate}
\item\label{cor:denseorbitdomain} If $0 \leq i \leq n-2$, then $\Gamma$ has a dense orbit in $\mathcal{E}_j$. In particular, there is a dense $\Gamma$-orbit in $\mathbb{RP}^n - \overline{\Omega}$. 
\item\label{cor:minimalitydomain} The action of $\Gamma$ on $\mathcal{E}_0$, that is, on the space of complete projective flags tangent to $\partial\Omega$, is minimal.
\end{enumerate}
\end{corollary}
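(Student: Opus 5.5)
The plan is to apply Theorem~\ref{thm:main} with $G=\mathrm{SL}_{n+1}(\mathbb{R})$, with $\tau$ the standard representation $\mathrm{Spin}_\circ(n,1)\to \mathrm{SO}_\circ(n,1)\hookrightarrow G$ (conjugated so that its image is $H$), and with $Q_\tau$ the stabilizer of a point-hyperplane pair. Thus $G/Q_\tau$ is the space of pairs $(x,V)$ with $x\in V$, and $H\cap Q_\tau$ is the stabilizer of a point of $\partial\mathbb{H}^n$ together with its tangent hyperplane, a minimal parabolic of $H$. Let $Q$ be the Borel subgroup, so that $G/Q$ is the space of complete flags.

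Since $\Gamma$ divides the strictly convex domain $\Omega$, Benoist's theorem makes $\Gamma$ hyperbolic and $\partial\Omega$ of class $\mathcal{C}^1$. It is standard (e.g.\ via Danciger--Gu\'eritaud--Kassel or Zimmer) that $\Gamma$ is then $Q_\tau$-Anosov, hence $Q_\tau$-regular antipodal. Its limit set $\Lambda^\tau_\Gamma$ is $\{(x,T_x\partial\Omega): x\in\partial\Omega\}$. Antipodality of this set is exactly strict convexity together with $\mathcal{C}^1$-ness: for $x\neq y$ we have $x\notin T_y\partial\Omega$.

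Next, for each $i$, I pick the relative position $w_i\in Q_\tau\backslash G/Q$ of a flag in $\mathcal{E}_i$ with respect to $(x,T_x\partial\Omega)$, where $x$ is the tangency point. Concretely, $V_j\cap\overline\Omega=\{x\}$ means that $x\in V_j\subset T_x\partial\Omega$ for $i\le j\le n-1$, while $x\notin V_j$ for $j<i$. So $w_i$ records the following data: $x$ first appears in $V_i$, and all $V_j$ lie in the hyperplane $T_x\partial\Omega$. I then must check three things. First, $w_i(\Lambda^\tau_\Gamma)=\mathcal{E}_i$; the lines $V_j$ for $j<i$ must avoid $\overline\Omega$, and this is automatic since $V_j\subset V_i\subset T_x\partial\Omega$ and the supporting hyperplane meets $\overline\Omega$ only at $x$. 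Second, $w_i$ lies in a slim ideal; this says that a flag in position $w_i$ determines its basepoint uniquely and depends continuously on it. Here I would use the KLP criterion: the closure of the $Q_\tau$-Schubert cell of $w_i$ consists of flags with $V_{n-1}\subset V$ and $V_i\ni x$-type incidences, and these are incompatible with being in such position with respect to two antipodal pairs. Third, $H$ acts transitively on $w_i(\Lambda^\tau_H)$. With $\Omega_0$ the ellipsoid, the stabilizer of $x$ in $H$ is $M_HA_HN_H$, and $M_H\cong\mathrm{SO}(n-1)$ acts on $T_x\partial\Omega_0$. The flags in question are those in $T_x$ with $x$ occurring at step $i$; $N_H\cong\mathbb{R}^{n-1}$ acts by translations on the affine chart of $\mathbb{RP}^{n-1}$-type data, and together with $M_H$ and $A_H$ this is a similarity group, which acts transitively on the relevant configurations. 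This is the transitivity check.

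For the stabilizers: when $i=0$, we have $V_0=x$, so $\mathcal{E}_0$ is compact and the stabilizer of a point contains $A_HN_H$ (the minimal position). Theorem~\ref{thm:main}(\ref{minimality}) then gives minimality. When $1\le i\le n-2$, I choose a point of $\mathcal{E}_i$ in which the first $i$ subspaces pass through a fixed affine line direction. Identifying the data in $T_x$ minus $x$ with similarities of $\mathbb{R}^{n-1}$ acting on affine flags, I would take $U_H$ to be translations along a direction $u$ contained in the affine part of $V_i$, and $A_H$ to be dilations about $x$; both fix the flag when $V_i$ contains that direction. This needs $i\le n-2$, because then there is room to arrange that $V_0,\dots,V_{i-1}$ are invariant under translation by $u$ and dilation. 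Theorem~\ref{thm:main}(\ref{denseorbit}) then gives a dense orbit. Finally, projecting $\mathcal{E}_{n-2}$ or $\mathcal{E}_{1}$ to $V_0$ yields $\mathbb{RP}^n-\overline\Omega$: every exterior point lies on a tangent line, whose flag completes inside $T_x$. A dense orbit upstairs therefore projects to a dense orbit in $\mathbb{RP}^n-\overline\Omega$.

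The main obstacle is the combinatorics of the stabilizer. I need that the $A_H$-action, which is a dilation centered at $x$ in the hyperplane chart, fixes a flag whose low-dimensional pieces avoid $x$. This forces those pieces to be projectively "at infinity" in the chart, i.e.\ contained in $T_x\cap$(the hyperplane at infinity of the dilation). I expect the correct choice to take $V_0,\dots,V_{i-1}$ inside the $A_H$-fixed hyperplane $W$ complementary to $x$ in $T_x$, with $U_H$ translating along a direction in $V_{i-1}$. I would verify this carefully in coordinates.
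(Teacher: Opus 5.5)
Your route is the paper's: $G=\mathrm{SL}_{n+1}(\mathbb{R})$, $Q$ the Borel subgroup, $Q_\tau=P_{1,n}$, $\Lambda_\Gamma^\tau=\{(x,T_x\partial\Omega)\}$, and $w_i$ given by ``$x$ first lies in $V_i$ and $V_{n-1}=T_x\partial\Omega$''. You then use Theorem~\ref{thm:main}(\ref{minimality}) for $i=0$, Theorem~\ref{thm:main}(\ref{denseorbit}) for $1\le i\le n-2$, and projection to $V_0$ for the exterior. Several of your steps are fine: the identification $w_i(\Lambda_\Gamma^\tau)=\mathcal{E}_i$, the slimness argument (the closed cell forces $V_{n-1}$ to be the given hyperplane, and antipodal pairs have distinct hyperplanes), and the case $i=0$.

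The gap is in the one verification that carries part~(1), namely that a point stabilizer contains $A_HU_H$. You model $P_H=M_HA_HN_H$ as a similarity group acting on an affine chart of $T_x$, with $N_H$ acting by translations. That model is false. The unipotent subgroup you end up with, translations along a direction contained in $V_{i-1}$ (or in $V_i$), does not fix the flag. Your transitivity claim rests on the same incorrect model.

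Here is the correct computation. Take $q=2v_0v_n+|v'|^2$ with $v'\in\mathbb{R}^{n-1}$ and $x=[e_0]$, so $T_x=\{v_n=0\}$. On $T_x$, the elements $a_t\in A_H$, $m\in M_H$ and $n_u\in N_H$ act by $(v_0,v')\mapsto(e^tv_0,v')$, $(v_0,mv')$ and $(v_0-u\cdot v',v')$ respectively. So $A_H$ acts by homologies with center $x$ and axis $W=\{v_0=0\}$, and each $n_u$ is an elation with center $x$ and axis $\{u\cdot v'=0\}$, not a translation. Both act trivially on $T_x/x$.

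Now take, as your last paragraph suggests, $V_{i-1}=\mathbb{P}(0\oplus L')\subset W$ with $\dim L'=i$, so that $V_i=\mathbb{P}(\mathbb{R}e_0\oplus L')$. Then $A_H$ fixes the flag. Since $n_u(0,v')=(-u\cdot v',v')$, the element $n_u$ fixes the flag if and only if $u\perp L'$. Hence $U_H=\{n_{su}\}_{s\in\mathbb{R}}$ with $0\neq u\in L'^{\perp}$, and such $u$ exists exactly when $i\le n-2$. Then $A_HU_H$ is the $AN$-subgroup of the copy of $\mathrm{SO}_\circ(2,1)$ preserving $\operatorname{span}\{e_0,u,e_n\}$, which is how the paper phrases it.

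So the direction must be orthogonal to $V_i/x$, not inside $V_{i-1}$. Your translation intuition is correct only on the polar side. There $N_H$ translates $\partial\Omega_0\smallsetminus\{x\}\cong\mathbb{R}^{n-1}$, and $U_H$ translates along the sphere $V_{i-1}^{\perp}\cap\partial\Omega_0$, which is an affine subspace of dimension $n-1-i$ in that chart.

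Transitivity follows from the same coordinates. The subspaces $V_{i-1}\not\ni x$ are graphs $\{(\phi(v'),v'):v'\in L'\}$. On these, $M_H$ moves $L'$ and $n_u$ replaces $\phi$ by $\phi-u|_{L'}$. The residual group $S(\mathrm{O}(L')\times\mathrm{O}(L'^{\perp}))$ acts transitively on pairs consisting of a complete flag of $V_{i-1}$ and a complete flag of $T_x/V_i$.
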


Examples of divisible $\Omega \subset \mathbb{RP}^n$ as in Corollary~\ref{domainexample} include the developing image of any convex projective structure on a closed hyperbolic $n$-manifold obtained by ``bending'' the hyperbolic structure along a closed embedded two-sided totally geodesic hypersurface (see~\cite[\S1.3]{MR2648674} and the references therein), as well as the aforementioned examples~\cite{MR2295544, zbMATH05220954}, which account for all known examples, of divisible strictly convex~$\Omega$ with dividing groups that are not quasiisometric to~$\mathbb{H}^n$.

Theorem~\ref{thm:main} also has the following consequence. Following~\cite{zbMATH06088868}, for $n \geq 2$, one says a discrete subgroup $\Gamma < \mathrm{SO}(n,2)$ is {\em $\mathrm{AdS}$-quasiFuchsian} if there is a (necessarily unique) $\Gamma$-invariant acausal topological $(n-1)$-sphere $\Lambda_\Gamma^\tau$ in the Einstein universe $\mathrm{Ein}^n$ such that the action of $\Gamma$ on the convex hull of $\Lambda_\Gamma^\tau$ in $\mathrm{AdS}^{n,1}$ is cocompact.\footnote{Note that, unlike in~\cite{zbMATH06088868}, we do not assume here that $\Gamma$ is commensurable to the fundamental group of a closed real hyperbolic manifold.} By~\cite[Theorem~1.24~and~Corollary 1.26]{MR4862504}, when $n \geq 3$, a subgroup $\Gamma < \mathrm{SO}(n,2)$ is $\mathrm{AdS}$-quasiFuchsian if and only if $\Gamma$ is Gromov-hyperbolic with Gromov boundary an $(n-1)$-sphere and is $Q_\tau$-Anosov in $\mathrm{SO}(n,2)$, where $Q_\tau$ denotes the stabilizer in $\mathrm{SO}(n,2)$ of an isotropic line in $\mathbb{R}^{n,2}$. In the latter case, the invariant acausal sphere $\Lambda_\Gamma^\tau \subset \mathrm{Ein}^n$ is precisely the limit set of $\Gamma$ in $\mathrm{Ein}^n = G/Q_\tau$. 

\begin{corollary}\label{sopq}
Let $n \geq 3$, and suppose  $\Gamma < \mathrm{SO}(n,2)$ is $\mathrm{AdS}$-quasiFuchsian. If there is some conjugate~$H$ of $\mathrm{SO}_\circ(n,1)$ in $\mathrm{SO}(n,2)$ such that $\Gamma \cap H$ is Zariski-dense in~$H$, then
\begin{enumerate}
\item\label{cor:denseorbitsopq} there is a dense $\Gamma$-orbit in the Furstenberg boundary $\mathcal{F}_{\mathrm{SO}(n,2)}$ of $\mathrm{SO}(n,2)$, that is, in the space of complete flags of isotropic subspaces of $\mathbb{R}^{n,2}$; and
\item\label{cor:minimalitysopq} the action of $\Gamma$ on the space of complete flags of isotropic subspaces of $\mathbb{R}^{n,2}$ containing an isotropic line in $\Lambda_\Gamma^\tau$ is minimal. In particular, the action of~$\Gamma$ on the space of photons in~$\mathrm{Ein}^n$ is minimal.
\end{enumerate}
\end{corollary}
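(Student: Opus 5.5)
Corollary~\ref{sopq} will be deduced from Theorem~\ref{thm:main} with $G=\mathrm{SO}(n,2)$, $\tau$ the standard inclusion of $\mathrm{Spin}_\circ(n,1)$ via $\mathrm{SO}_\circ(n,1)\hookrightarrow\mathrm{SO}(n,2)$ whose image is $H$, and $Q_\tau$ the stabilizer of an isotropic line.

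\textbf{Setup.} Write $\mathbb{R}^{n,2}=\mathbb{R}^{n,1}\oplus\mathbb{R}e$ with $e$ negative, and $H$ the identity component of the stabilizer of $e$. An isotropic line of $\mathbb{R}^{n,1}$ is isotropic in $\mathbb{R}^{n,2}$, so $H\cap Q_\tau$ is a parabolic of $H$, namely the stabilizer of a point of $\partial\mathbb{H}^n$. Thus $Q_\tau$ is compatible with $\tau$. It is reflexive, since it is conjugate to its opposite. The orbit $\Lambda_H^\tau\subset\mathrm{Ein}^n$ is the $(n-1)$-sphere of isotropic lines inside $e^\perp$.

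$\mathrm{AdS}$-quasiFuchsian groups are $Q_\tau$-Anosov by the characterization quoted from~\cite{MR4862504}. Hence $\Gamma$ is $Q_\tau$-regular antipodal, with limit set $\Lambda_\Gamma^\tau$.

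\textbf{Choice of $Q$ and $w$.} Take $Q=B$, the Borel subgroup, so $G/Q=\mathcal{F}_{\mathrm{SO}(n,2)}$, the space of pairs (isotropic line $\ell$ $\subset$ isotropic plane $P$). The double cosets $Q_\tau\backslash G/B$ are the relative positions of a flag $(\ell,P)$ with respect to an isotropic line $x$. They are ordered by: $x=\ell$; $x\ne\ell$ with $x\subset P$; $x\not\subset P$ with $x\perp P$; $x\perp\ell$ with $x\not\perp P$; and the generic position.

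For \eqref{cor:minimalitysopq}, take $w_{\min}$ to be ``$x=\ell$''. Then $w_{\min}(\Lambda_\Gamma^\tau)$ is exactly the space of flags whose line lies in $\Lambda_\Gamma^\tau$. The minimal element lies in every slim ideal, and the fiber over $x$ is compact. The point stabilizer in $H$ of $(x,P)$ must contain $A_HN_H$; this holds if we choose $P=x\oplus\mathbb{R}v$ with $v$ isotropic and $v$ fixed by $A_HN_H$. Concretely, let $\ell_\pm$ be the fixed points of $A_H$ in $\partial\mathbb{H}^n$. Set $v=e+u$ with $u\in\ell_+^\perp\cap\ell_-^\perp\cap\mathbb{R}^{n,1}$ suitably normalized. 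I expect the cleaner statement to be that $M_HA_HN_H$ acts transitively on the fiber $\{P\ni x\}$, which is a circle or sphere of photons. Either way, transitivity of $H$ on $w_{\min}(\Lambda_H^\tau)$ and an $A_HN_H$-fixed point is what has to be checked, and the compactness remark after Theorem~\ref{thm:main} in fact makes this automatic. Theorem~\ref{thm:main}\eqref{minimality} then gives minimality. Projecting to photons (isotropic planes through points of $\Lambda_\Gamma^\tau$) is $\Gamma$-equivariant and surjective, so minimality passes to the photon space.

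For \eqref{cor:denseorbitsopq}, take $w$ the generic position. Its orbit is open and dense in $\mathcal{F}$, because $\Lambda_\Gamma^\tau$ is a topological sphere, and the set of flags not in generic position with respect to every point of $\Lambda_\Gamma^\tau$ is a nowhere dense closed set. So a dense orbit in $w(\Lambda_\Gamma^\tau)$ suffices. One must verify that $w$ lies in a slim ideal, i.e.\ that a flag cannot be in position $w$ with respect to two antipodal (transverse) lines. This uses the known description of slim ideals for the pair $(Q_\tau,B)$, and if the generic position fails to be slim I would instead use a thickened/intermediate position whose $\Gamma$-saturation is still dense.

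\textbf{Main obstacle.} The hard step is checking the remaining hypotheses of Theorem~\ref{thm:main}\eqref{denseorbit}: that $H$ acts transitively on $w(\Lambda_H^\tau)$, and that the stabilizer contains $A_HU_H$ for a unipotent one-parameter $U_H<N_H$. The plan is a dimension count:
\begin{equation*}
\dim w(\Lambda_H^\tau)=(n-1)+\dim(\text{fiber})\le\dim H-\dim(A_HU_H)=\tfrac{n(n+1)}{2}-2.
\end{equation*}
Then I would exhibit an explicit flag $(\ell,P)$ in generic position to $x=\ell_+$, with $\ell=\mathbb{R}(y)$ where $y\in e+(\text{$A_H$-invariant vectors})$, fixed by $A_H$ and by a root group $U_H$ inside the centralizer of that direction. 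Transitivity would come from $H$-orbit openness plus connectedness, via the classification of $H$-orbits on $\mathcal{F}$ by $\mathrm{SO}(n,1)$-type invariants (signatures of restricted forms on $\ell$, $P$ relative to $e$).
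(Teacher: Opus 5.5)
Your treatment of part (1) has a genuine gap: the generic position cannot be used in Theorem~\ref{thm:main}, and a dense orbit in $\mathcal{F}_{\mathrm{SO}(n,2)}$ also needs a geometric fact about $\Lambda_\Gamma^\tau$ that you do not supply. Part (2) is essentially correct.

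\textbf{Part (1).} The generic position $w_{\max}$ fails the hypotheses of Theorem~\ref{thm:main} in two ways.
\begin{itemize}
\item \emph{Slimness.} The only ideal containing $w_{\max}$ is all of $Q_\tau\backslash G/Q$, which is not slim. A flag is typically in generic position relative to a whole open subset of $\Lambda_\Gamma^\tau$, so there is no map $w_{\max}(\Lambda_\Gamma^\tau)\to\Lambda_\Gamma^\tau$ at all.
\item \emph{Transitivity.} No isotropic line is orthogonal to all of $\Lambda_H^\tau$, since $\Lambda_H^\tau$ spans $e^\perp$ and $\mathbb{R}e$ is not isotropic. Hence $w_{\max}(\Lambda_H^\tau)=\mathcal{F}_{\mathrm{SO}(n,2)}$. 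But $H$ is not transitive there: it preserves the proper closed set of flags whose line lies in $\Lambda_H^\tau$. No dimension count repairs this.
\end{itemize}

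Your fallback ``intermediate position'' has exactly one candidate: $w=$ ``$x\subset P$, $x\neq\ell$''. The ideal $\{x\subset P\}$ is slim, because two transverse isotropic lines are non-orthogonal and so cannot lie in a common photon. By contrast, $\{x\perp\ell\}$ is not slim, since $x^\perp\cap x'^\perp$ has signature $(n-1,1)$ and contains isotropic lines.

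For this $w$, $H$ is transitive on $w(\Lambda_H^\tau)$:
\begin{itemize}
\item $H$ is transitive on $\Lambda_H^\tau$;
\item $M_H\cong\mathrm{SO}(n-1)$ acts transitively on the $(n-2)$-sphere of photons through $x$;
\item $N_H$ acts transitively by translations on $\mathbb{P}(P)\smallsetminus\{x\}$.
\end{itemize}
The point stabilizer is a parabolic subgroup of a conjugate of $\mathrm{SO}_\circ(n-1,1)$. It contains $A_HU_H$ for a suitable one-parameter $U_H<N_H$, and this is where $n\geq 3$ is used.

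The missing idea is what makes $w(\Lambda_\Gamma^\tau)$ dense: \emph{every photon meets} $\Lambda_\Gamma^\tau$. Given this, $\mathcal{F}_{\mathrm{SO}(n,2)}$ is the disjoint union of $w(\Lambda_\Gamma^\tau)$ and the closed $(2n-3)$-dimensional set $w_{\min}(\Lambda_\Gamma^\tau)$, so $w(\Lambda_\Gamma^\tau)$ is open and dense. This fact comes from acausality, not from $\Lambda_\Gamma^\tau$ merely being a sphere. The paper cites it. Alternatively, lift $\Lambda_\Gamma^\tau$ to the graph of a $1$-Lipschitz function on $\mathbb{S}^{n-1}$ in $\widetilde{\mathrm{Ein}}^n$, and apply the intermediate value theorem along a lifted lightlike geodesic.

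\textbf{Part (2).} This matches the paper: $w$ is the minimal position $x=\ell$, and the stabilizer is $M'_HA_HN_H$ with $M'_H\cong\mathrm{SO}(n-2)$. Three corrections:
\begin{itemize}
\item $v=e+u$ is not fixed by $N_H$, which sends it to $v+c\,x$. Only the plane $x\oplus\mathbb{R}v$ is preserved.
\item The compactness remark after Theorem~\ref{thm:main} presupposes transitivity, so that check is not automatic. It follows from $M_H$ acting transitively on $\mathbb{S}^{n-2}$.
\item The ``in particular'' concerns \emph{all} photons of $\mathrm{Ein}^n$. Your restriction to photons through points of $\Lambda_\Gamma^\tau$ equals the full photon space only by the same fact that every photon meets $\Lambda_\Gamma^\tau$.
\end{itemize}

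Finally, your list of relative positions has a spurious entry. ``$x\perp P$ with $x\not\subset P$'' would produce a totally isotropic $3$-space, which is impossible. There are only four positions, matching $|W_{Q_\tau}\backslash W|=4$.
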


A crucial observation in~\cite{arXiv:2608.27274} is that there is an open dense $\mathrm{SO}_{\circ}(n,1)$-orbit in $\mathcal{F}_{\mathrm{SO}(n,2)}$ with noncompact point stabilizers. By Moore's ergodicity theorem, this implies that any lattice in $\mathrm{SO}_{\circ}(n,1)$ acts ergodically on $\mathcal{F}_{\mathrm{SO}(n,2)}$ with respect to the Lebesgue measure class. It was further shown in~\cite{arXiv:2608.27274} that, for $n\geq3$, if $\Gamma < \mathrm{SO}_{\circ}(n,1)$ is a cocompact lattice, then the action on $\mathcal{F}_{\mathrm{SO}(n,2)}$ of any sufficiently small deformation of $\Gamma$ within $\mathrm{SO}(n,2)$ remains ergodic, and in particular continues to possess a dense orbit in $\mathcal{F}_{\mathrm{SO}(n,2)}$.
However, while examples of $\Gamma$ as in Corollary~\ref{sopq} include all $\Gamma$ obtained by ``bending''  cocompact lattices in $\mathrm{SO}_\circ(n,1)$ (see~\cite[\S6]{MR2915550}), they also include all known examples~\cite{MR3968766, MR5111594} of $\mathrm{AdS}$-quasiFuchsian subgroups of~$\mathrm{SO}(n,2)$ that are {\em not} virtually isomorphic to uniform lattices in $\mathrm{SO}_\circ(n,1)$.

We also record the following instance of Theorem~\ref{thm:main}(\ref{minimality}). Given an integer $d \geq 2$ and a sequence of integers $1 \leq d_1 < \ldots < d_k \leq d-1$, denote by $\mathcal{F}_{d_1, \ldots, d_k}(\mathbb{R}^d)$ the space of flags $V_{d_1} \subset \cdots \subset V_{d_k}$ of linear subspaces of $\mathbb{R}^d$ of dimension $d_1, \ldots, d_k$, respectively, and let 
$P_{d_1, \ldots, d_k}$ be the stabilizer in $\mathrm{SL}_d(\mathbb{R})$ of such a flag, so that $\mathcal{F}_{d_1, \ldots, d_k}(\mathbb{R}^d) \simeq \mathrm{SL}_d(\mathbb{R})/P_{d_1, \ldots, d_k}$ as $\mathrm{SL}_d(\mathbb{R})$-spaces. We will denote by $\mathcal{F}(\mathbb{R}^d)$ the space $\mathcal{F}_{1, \ldots, d-1}(\mathbb{R}^d)$ of complete flags of linear subspaces of $\mathbb{R}^d$. Given a discrete subgroup $\Gamma < \mathrm{SL}_d(\mathbb{R})$, denote by $\Lambda_\Gamma^{d_1, \ldots, d_k}$ (respectively, by $\Lambda_\Gamma^\mathcal{F}$) the flag limit set of~$\Gamma$ in $\mathcal{F}_{d_1, \ldots, d_k}(\mathbb{R}^d)$ (respectively, in $\mathcal{F}(\mathbb{R}^d)$).

\begin{corollary}\label{rp3}
Let $\tau: \mathrm{SL}_2(\mathbb{C}) \rightarrow \mathrm{SL}_4(\mathbb{R})$ be the representation given by restricting scalars, and let $\Gamma$ be a $P_2$-Anosov subgroup of $\mathrm{SL}_4(\mathbb{R})$ such that the limit set $\Lambda_\Gamma^2$ of~$\Gamma$ in the $2$-Grassmannian $\mathcal{F}_2(\mathbb{R}^4)$ is a topological $2$-sphere. If there is a conjugate~$H$ of $\tau(\mathrm{SL}_2(\mathbb{C}))$ in $\mathrm{SL}_4(\mathbb{R})$ such that $\Gamma \cap H$ is Zariski-dense in $H$, then the action of~$\Gamma$ on $\mathbb{RP}^3$ is minimal.
\end{corollary}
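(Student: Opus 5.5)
We will deduce Corollary~\ref{rp3} from Theorem~\ref{thm:main}(\ref{minimality}), applied with $G=\mathrm{SL}_4(\mathbb{R})$, $Q_\tau = P_2$ and $Q = P_1$, so that $G/Q = \mathbb{RP}^3$. Here $\mathrm{SL}_2(\mathbb{C})$ is identified with $\mathrm{Spin}_\circ(3,1)$, and $H$ is a conjugate of $\tau(\mathrm{SL}_2(\mathbb{C}))$.

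\textbf{Compatibility of $P_2$ with $\tau$.} First we check that $P_2$ is reflexive and compatible with $\tau$. Reflexivity holds because the opposition involution of $\mathrm{SL}_4$ fixes the middle simple root. For compatibility, $H$ preserves the set of complex lines of $\mathbb{C}^2=\mathbb{R}^4$. These form a copy of $\mathbb{CP}^1 \subset \mathcal{F}_2(\mathbb{R}^4)$, which is the unique closed $H$-orbit. The stabilizer of a complex line is the Borel subgroup of $\mathrm{SL}_2(\mathbb{C})$, so $H\cap P_2$ is a proper parabolic subgroup of $H$. Moreover, $\Lambda_H^2 = \mathbb{CP}^1$. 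A $P_2$-Anosov subgroup is $P_2$-regular antipodal, so the hypotheses on $\Gamma$ are satisfied.

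\textbf{Choice of $w$.} Next we choose $w$ to be the minimal element of $P_2\backslash G/P_1$, namely the relative position ``the line is contained in the plane''. Then $w(\Lambda)$ is the set of lines contained in some plane of $\Lambda$. Minimal elements lie in every nonempty slim ideal, and the paper notes that compactness of $w(\Lambda)$ corresponds to minimality of $w$. Concretely, one can also check slimness directly: since $\Lambda$ is antipodal, two distinct planes in $\Lambda$ are transverse, so no line lies in two of them. This makes $w(\Lambda)\to\Lambda$ a well-defined fibration with fibres $\mathbb{RP}^1$.

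For $H$, $w(\Lambda_H^2)$ is the set of real lines lying in complex lines, which is all of $\mathbb{RP}^3$ (the Hopf fibration). The group $\mathrm{SL}_2(\mathbb{C})$ acts transitively on nonzero vectors of $\mathbb{C}^2$, so $H$ acts transitively on $\mathbb{RP}^3$. The stabilizer of the real line through $e_1$ contains the matrices $\mathrm{diag}(a,a^{-1})$ with $a>0$, which form $A_H$. It also contains the upper unipotent matrices $\begin{pmatrix}1&z\\0&1\end{pmatrix}$ with $z\in\mathbb{C}$, which form $N_H$. Hence the stabilizer contains $A_HN_H$, and Theorem~\ref{thm:main}(\ref{minimality}) shows that $\Gamma$ acts minimally on $w(\Lambda_\Gamma^2)$.

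\textbf{Main obstacle: $w(\Lambda_\Gamma^2)=\mathbb{RP}^3$.} It remains to show that the lines lying in planes of $\Lambda_\Gamma^2$ cover all of $\mathbb{RP}^3$; this is where the hypothesis that $\Lambda_\Gamma^2$ is a $2$-sphere enters. By antipodality, $w(\Lambda_\Gamma^2)$ is a compact subset of $\mathbb{RP}^3$ which is the total space of a circle bundle over $S^2$, so it is a closed $3$-manifold. A closed $3$-manifold embedded in the connected $3$-manifold $\mathbb{RP}^3$ is open by invariance of domain. It is also closed, hence it equals $\mathbb{RP}^3$. We expect the only delicate point to be checking that the fibration is locally trivial, so that $w(\Lambda_\Gamma^2)$ really is a topological $3$-manifold. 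This follows from \cite[Lemma~7.4]{MR3720343}, or directly from the continuity of the fibration. With $w(\Lambda_\Gamma^2)=\mathbb{RP}^3$, the minimality obtained above is exactly the statement of the corollary.
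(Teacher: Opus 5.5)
Your proof is correct and follows the paper's argument: the paper also applies Theorem~\ref{thm:main}(\ref{minimality}) with $G=\mathrm{SL}_4(\mathbb{R})$, $Q=P_1$, $Q_\tau=P_2$ and $w$ the position ``line contained in plane'', using that $H$ acts transitively on $\mathbb{RP}^3$ with point stabilizers containing $A_HN_H$. The only difference is that the paper cites the proof of \cite[Proposition~8.2]{zbMATH07262226} for $w(\Lambda_\Gamma^2)=\mathbb{RP}^3$, while you prove it directly by invariance of domain; the local triviality you need holds because the compact-to-Hausdorff bijection identifies $w(\Lambda_\Gamma^2)$ with the restriction to $\Lambda_\Gamma^2$ of the tautological $\mathbb{RP}^1$-bundle over $\mathcal{F}_2(\mathbb{R}^4)$.
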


The following statement, while not an immediate consequence of Theorem~\ref{thm:main}, is proved in a similar vein.

\begin{proposition}\label{sl3r}
Let $\Gamma$ be a regular antipodal subgroup of $\mathrm{SL}_3(\mathbb{R})$ whose limit set is a topological circle. If $\Gamma \cap H$ is Zariski-dense in $H$ for some reducible copy $H$ of $\mathrm{SL}_2(\mathbb{R})$ in $\mathrm{SL}_3(\mathbb{R})$, then $\Gamma$ has a dense orbit in $\mathbb{RP}^2$.
\end{proposition}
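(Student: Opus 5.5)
Fix coordinates with $H=\{\mathrm{diag}(g,1)\}\cong \mathrm{SL}_2(\mathbb{R})$ (conjugate by $\mathrm{diag}(g,(\det)^{-1})$ if the reducible copy is a $\mathrm{GL}_2$-twist; any reducible $\mathrm{SL}_2(\mathbb{R})$ is conjugate to this one, possibly after transposing, which swaps the roles of points and lines). Set $\Lambda=\Lambda_\Gamma^{\mathcal F}\subset\mathcal F(\mathbb R^3)$. Since $\Gamma$ is regular antipodal, $\Lambda$ projects homeomorphically onto its images $\Lambda^1\subset\mathbb{RP}^2$ and $\Lambda^2\subset(\mathbb{RP}^2)^*$, and these are antipodal. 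The limit set of $\Gamma\cap H$ is contained in $\Lambda$. It is the circle of flags $(\ell,\ell\oplus e_3)$, $\ell\subset\langle e_1,e_2\rangle$, if the $\mathrm{SL}_2$-block is the dominant one, or the circle $(\ell, \langle e_1,e_2\rangle)$ otherwise, depending on which singular value gap of $\mathrm{diag}(g,1)$ dominates. For an element $\mathrm{diag}(a,a^{-1},1)$ with $a$ large, the gaps are $a$ and $a$, so in fact both coordinates are attracting, and the limit flag of a divergent sequence in $H$ is $(\ell,\langle e_1,e_2\rangle)$.

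So $\Lambda_{\Gamma\cap H}$ is the circle $C=\{(\ell,\langle e_1,e_2\rangle)\}$. Because $\Lambda$ is itself a topological circle and $\Lambda_{\Gamma\cap H}\cong S^1$ is a closed subset, we get $\Lambda=C$. But then $\Lambda^2$ is a single point, contradicting antipodality unless I made an error. Hence I must recheck the relevant flag limit set. The flag limit set here should be the one in $\mathbb{RP}^2$ only; antipodality gives injectivity of $\Lambda\to\Lambda^1$.

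Accepting that the structure is $\Lambda^1=$ the line $L=\mathbb P\langle e_1,e_2\rangle$ (with $\Gamma$ preserving $L$), the plan is as follows. First identify $\Lambda^1$ with $L$, using that a circle contains no proper subcircle. Then $\Gamma$ preserves $L$, and hence $\Gamma$ is reducible. This suggests that $\Gamma$ is contained in the stabilizer $P$ of $L$, with $P/\text{radical}\cong \mathrm{SL}_2^\pm$. The complement $\mathbb{RP}^2-L$ is then an affine plane on which $\Gamma$ acts by affine maps. Its linear parts lie in $\mathrm{GL}_2$ and restrict to $\Gamma\cap H$ as a Zariski-dense subgroup of $\mathrm{SL}_2(\mathbb R)$.

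The key step is to show that this affine action has a dense orbit. Here I would use the homogeneous-dynamics input in the spirit of the proof of Theorem~\ref{thm:main}. The point stabilizer of $H$ acting on $\mathbb{RP}^2-L\cong\mathbb R^2$ minus the origin direction is conjugate to $U_H$, since $\mathbb{R}^2-\{0\}=\mathrm{SL}_2(\mathbb R)/N$. For $\Gamma\cap H$ nonelementary, a result on horocycle flows in infinite covolume (Dal'bo's theorem: $N$-orbits of points of $\Gamma\backslash \mathrm{SL}_2(\mathbb{R})$ over the nonwandering set are dense in it) gives dense orbits of $\Gamma\cap H$ on an open cone of $\mathbb R^2-0$, namely the directions in $\Lambda_{\Gamma\cap H}$. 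To upgrade this to density in all of $\mathbb{RP}^2$, I would use the elements of $\Gamma$ outside $H$, with nontrivial translational or scaling parts, together with the fact that the limit circle is all of $L$, so every direction is a limit direction.

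The main obstacle is the identification in the second paragraph, namely reconciling regularity and antipodality with the geometry of $H$. My guess is that antipodality forces $\Gamma\cap H$ to be convex cocompact with limit set a Cantor set in $L$, not all of $L$. The argument then has to use that $\Lambda^1$ is a circle meeting $L$ in that Cantor set, and combine this with the $U_H$-orbit density on the preimage of that set.
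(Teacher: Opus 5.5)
Your argument fails at the singular-value computation in your second paragraph, and everything after it rests on the resulting false premise. For $a>1$, the element $\mathrm{diag}(a,a^{-1},1)$ has singular values $a>1>a^{-1}$, and the middle one comes from $e_3$. So its attracting flag is $(\langle e_1\rangle,\langle e_1,e_3\rangle)$, not $(\langle e_1\rangle,\langle e_1,e_2\rangle)$. The limit flags of $\Gamma\cap H$ are therefore the flags $(\ell,\ell\oplus\langle e_3\rangle)$ with $\ell$ in the limit set of $\Gamma\cap H$ in $L=\mathbb{P}\langle e_1,e_2\rangle$. These are pairwise antipodal, so there is no contradiction to ``recheck''. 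Moreover, $\Gamma\cap H$ is only assumed Zariski-dense in $H$, so its limit set in $L$ is in general a Cantor set, not all of $L$. Your conclusion $\Lambda^1=L$, and hence that $\Gamma$ is reducible, holds only in the degenerate case where $\Gamma\cap H$ is a lattice in $H$. In general $\Gamma$ does not preserve $L$: the example following the proposition produces such $\Gamma$ that are Zariski-dense in $\mathrm{SL}_3(\mathbb{R})$. So the affine action on $\mathbb{RP}^2-L$ on which your plan relies is not available.

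What is missing is the right fibration and, above all, a covering step. The paper works with the dual limit set $\Lambda^*_\Gamma\subset(\mathbb{RP}^2)^*$ and the projections $\pi:\mathcal{F}(\mathbb{R}^3)\to\mathbb{RP}^2$ and $\pi_*:\mathcal{F}(\mathbb{R}^3)\to(\mathbb{RP}^2)^*$; the fiber of $\pi_*$ over a line is the set of points on that line. Your cone over $\Lambda_{\Gamma\cap H}$ (closed, not open, when that limit set is a Cantor set) is exactly the union of the lines of $\Lambda^*_\Delta$, namely the lines joining $[e_3]$ to points of $\Lambda_\Delta\subset L$. So horocyclic density (Hedlund/Dal'bo) for a Zariski-dense convex cocompact $\Delta<\Gamma\cap H$ gives a flag whose $\Gamma$-orbit closure contains a whole fiber of $\pi_*$. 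Minimality of $\Gamma$ on $\Lambda^*_\Gamma$ then spreads this to all of $\pi_*^{-1}(\Lambda^*_\Gamma)$.

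The essential step, for which your proposal has no substitute, is that $\pi$ restricted to $\pi_*^{-1}(\Lambda^*_\Gamma)$ is surjective, i.e., that the lines of $\Lambda^*_\Gamma$ cover $\mathbb{RP}^2$. This is where the circle hypothesis and the reducibility of $H$ are used. The paper proves it by contradiction. Otherwise $\Lambda^*_\Gamma$ lies in an affine chart of $(\mathbb{RP}^2)^*$, and since it is a connected topological circle, its convex hull is $\Gamma$-invariant with nonempty interior. This forces every triple of distinct flags in $\Lambda_\Gamma^{\mathcal{F}}$ to be positive. But any three distinct flags $(\ell_i,\ell_i\oplus\langle e_3\rangle)$ in the limit set of $\Gamma\cap H$ have triple ratio $-1$, so they are not positive.
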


Given a semisimple real algebraic group $G$, a minimal parabolic subgroup $B < G$, a proper parabolic subgroup $P < G$, and a Zariski-dense discrete subgroup $\Gamma < G$, work of Benoist~\cite{MR1770716} implies surjectivity of the $\Gamma$-equivariant map $\Lambda_\Gamma^B \rightarrow \Lambda_\Gamma^P$ between the Benoist--Guivarc'h limit sets $\Lambda_\Gamma^B$ and $\Lambda_\Gamma^P$ of $\Gamma$ in $G/B$ and $G/P$, respectively. In the case that $\Gamma$ is moreover $P$-Anosov, one might then be led to believe, based on the cases where Theorem~\ref{thm:main}(\ref{minimality}) applies with $B = Q$ and $P=Q_\tau$, or based on the examples where $\Gamma$ is in fact $B$-Anosov, that this map is always a fibration. One cannot expect the latter to hold in general, as demonstrated by the following examples.

\begin{theorem}\label{distinctfibers} For any integer $n \geq 2$, there exists a Zariski-dense $P_1$-Anosov rank-two free subgroup $\Gamma<\mathrm{SL}_{2n}(\mathbb{R})$ such that the $\Gamma$-equivariant projection $\Lambda_{\Gamma}^{\mathcal{F}}\rightarrow \Lambda_{\Gamma}^{1,2n-1}$ has a fiber that is a singleton and another fiber that is infinite. \end{theorem}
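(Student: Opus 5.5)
Throughout, write $\pi\colon \Lambda_\Gamma^{\mathcal F}\to\Lambda_\Gamma^{1,2n-1}$ for the projection in the statement. The construction I have in mind is a free group $\Gamma=\langle a,b\rangle$ generated by two elements in ping-pong position. One generator, $a$, will be chosen so that $\pi$ has an infinite fiber over its attracting point; the other, $b$, will be chosen so that $\pi$ has a singleton fiber over its attracting point.

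\textbf{Choice of generators.}
\begin{enumerate}
\item Take $a$ to be proximal on $\mathbb{R}^{2n}$ and on its dual, but with a \emph{non-proximal middle part}. Concretely, up to conjugacy,
\[ a=\mathrm{diag}(\lambda,\ R,\ \lambda^{-1}), \]
where $R\in \mathrm{SO}(2n-2)\cdot\mathbb{R}_{>0}$ is a scaled generic rotation with irrational angles, and $\lambda$ is large compared to $\|R\|$. Then $a$ is $P_1$-proximal, so it is $P_{1,2n-1}$-loxodromic, but it is not loxodromic in the full flag variety.
\item Take $b$ to be loxodromic in $\mathcal F(\mathbb{R}^{2n})$, that is, with real eigenvalues of pairwise distinct moduli, in general position with respect to $a$.
\item Replace $a,b$ by high powers $a^N,b^N$. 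Standard ping-pong in $\mathcal F_{1,2n-1}$, using attracting and repelling neighborhoods for the $P_1$ and $P_{2n-1}$ flags, shows that $\Gamma$ is free of rank two and $P_1$-Anosov. Equivalently, it shows that $\Gamma$ has uniform gaps $\sigma_1/\sigma_2$ along geodesics; one can cite the combination results of Dey--Kapovich or Kapovich--Leeb--Porti here.
\item For Zariski-density, note that the closure of $\langle a\rangle$ contains a torus of rotations. Together with a generic $b$, this should generate a Zariski-dense group. One way to check this is to verify irreducibility and use proximality, which via Benoist's classification forces $\mathrm{SL}_{2n}$ (or something small to rule out). Alternatively, perturb $b$ generically; since Zariski-density is generic among these choices, this should suffice.
\end{enumerate}

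\textbf{The infinite fiber.} The Benoist--Guivarc'h limit set $\Lambda_\Gamma^{\mathcal F}$ is closed and $\Gamma$-invariant. For a Zariski-dense group it is the unique minimal set, and it is the closure of the attracting flags of loxodromic elements of $\Gamma$. Let $\xi=\pi(\text{point})=(e_1, e_1\oplus\cdots\oplus e_{2n-1})$ be the attracting point of $a$. Take any flag $F\in\Lambda_\Gamma^{\mathcal F}$ lying over a point near $\xi$; for example, $F$ could be the attracting flag of $a^k g$ for a suitable $g$. Applying $a^m$ then yields flags converging into $\pi^{-1}(\xi)$. The middle part of these flags, namely the subspaces between $e_1$ and the hyperplane, is rotated by $R^m$, and by irrationality the sequence accumulates on an entire $\overline{\langle R\rangle}$-orbit. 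That orbit is an infinite subset of the partial flag variety of $\mathbb{R}^{2n-2}$, provided the middle data of $F$ is not fixed by the torus, which holds generically. Since $\Lambda_\Gamma^{\mathcal F}$ is closed, the fiber $\pi^{-1}(\xi)$ is infinite.

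\textbf{The singleton fiber.} This is the subtle part, and I expect it to be the main obstacle. Let $\eta$ be the attracting point of $b$ in $\mathcal F_{1,2n-1}$. One must show that $\pi^{-1}(\eta)$ contains only $b$'s attracting full flag $F_b^+$. The plan is to use a contraction argument on fibers:
\begin{itemize}
\item Points of $\Lambda_\Gamma^{1,2n-1}$ correspond to boundary words, and $\eta$ corresponds to the word $b^\infty$.
\item Any flag $F$ over $\eta$ is a limit of flags $b^m F_m$, where the $F_m$ range over a compact subset of $\Lambda_\Gamma^{\mathcal F}$. This subset lies over points of the limit set that are not in the repelling region of $b$.
\item Provided every flag in that compact set is transverse to the repelling flag $F_b^-$ of $b$, the dynamics of $b$ on $\mathcal F$ force $b^m F_m\to F_b^+$.
\end{itemize}
So the key claim is that the whole of $\Lambda_\Gamma^{\mathcal F}$, away from the $b^{-1}$-cylinder, is transverse to $F_b^-$.

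To arrange this, I would choose $b$ generic relative to $a$, and take powers large enough that ping-pong neighborhoods for the full flag variety work around $b^{\pm}$, since $b$ is loxodromic there. The difficulty is that $\Lambda_\Gamma^{\mathcal F}$ contains the large fibers coming from $a$, namely the torus orbits above. One must ensure that the neighborhoods of these fibers, in the full flag variety and after applying group elements that begin with $a^{\pm}$, all avoid the non-transverse locus of $F_b^-$. My approach would be to choose $b$ so that $F_b^-$ is transverse to the whole compact set
\[ \overline{\langle R\rangle}\cdot(\text{middle flags})\times\{\xi^{\pm}\}. \]
This is an open, nonempty condition, because the torus orbit has positive codimension-type smallness relative to the Schubert divisor. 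Then, after taking powers, a ping-pong argument in $\mathcal F$ restricted to the cylinder sets should show that the limit set in each cylinder stays in a fixed small neighborhood of these compact sets.
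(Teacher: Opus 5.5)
\textbf{There is a genuine gap, and it is fatal to the construction, not just to the write-up.} It sits where you expected. Your key claim is that $b$ can be chosen so that $F_b^-$ is transverse to every flag of the torus family over $\xi$. That claim is false for \emph{every} $b$.

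Write $\mathbb{R}^{2n}=\mathbb{R}e_1\oplus W\oplus\mathbb{R}e_{2n}$. Your infinite-fiber argument shows that $\Lambda_\Gamma^{\mathcal F}\cap\pi^{-1}(\xi)$ contains the flags $K^T=(e_1\subset e_1\oplus TE_1\subset\cdots\subset e_1\oplus TE_{2n-3}\subset e_1\oplus W)$ for all $T\in\overline{\langle R\rangle}$, where $\mathbb{R}u=E_1\subset\cdots\subset E_{2n-3}$ is some flag in $W$.

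A full flag $F$ transverse to $K^T$ must satisfy $F_{2n-2}\cap(e_1\oplus TE_1)=0$.
\begin{itemize}
\item If $e_1\in F_{2n-2}$ or $F_{2n-2}\subset e_1\oplus W$, this fails for all $T$.
\item Otherwise, projecting $F_{2n-2}\cap(e_1\oplus W)$ to $W$ along $e_1$ gives a hyperplane $v^\perp\subset W$, and the condition becomes $\langle Tu,v\rangle\neq 0$.
\end{itemize}
Since your rotation angles are irrational, the identity component $\mathbb{T}$ of $\overline{\langle R\rangle}$ has no nonzero fixed vector in $W$. Hence $\int_{\mathbb{T}}Tu\,dT=0$, so the continuous function $T\mapsto\langle Tu,v\rangle$ on the connected group $\mathbb{T}$ has mean zero and therefore a zero. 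So the family over $\xi$ always meets the Schubert divisor of $F_b^-$. In a real flag variety these divisors are separating hypersurfaces, and the ``codimension'' heuristic you invoke does not apply.

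\textbf{This breaks the conclusion itself.} Take $T$ with $K^T$ not transverse to $F_b^-$. Since $\pi(K^T)=\xi$ is transverse to $\pi(F_b^-)$, the flags $b^mK^T\in\Lambda_\Gamma^{\mathcal F}$ converge to the $b$-fixed flag in the non-open Bruhat cell of $K^T$ relative to $F_b^-$. This is a flag $F'\neq F_b^+$ with $\pi(F')=\eta$, so $\pi^{-1}(\eta)$ is not a singleton.

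For $n=2$ it is worse. $\mathrm{SO}(2)$ is transitive on $\mathbb{P}(W)$, so $\pi^{-1}(\xi)$ is the entire circle fiber of $\mathcal F(\mathbb{R}^4)\to\mathcal F_{1,3}(\mathbb{R}^4)$. By minimality of $\Gamma$ on $\Lambda_\Gamma^{1,3}$ and closedness, $\Lambda_\Gamma^{\mathcal F}$ is then the full preimage of $\Lambda_\Gamma^{1,3}$, so no fiber is a singleton. The rotation that produces your infinite fiber is exactly what destroys uniform transversality.

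\textbf{How the paper avoids this.} The paper gives the non-loxodromic generator an \emph{identity} middle block instead of a rotation: $B=h\,\mathrm{diag}(\mu_1,\ldots,\mu_{n-1},1,1,\mu_{n-1}^{-1},\ldots,\mu_1^{-1})h^{-1}$.
\begin{itemize}
\item Then $\wedge^iB$ is biproximal for $i\neq n$, and $[\wedge^nB^{\pm r}]$ converges to fixed rank-two maps, so nothing sweeps.
\item The open conditions (C1)--(C6) on $h$ give ping-pong inclusions in every $\mathbb{P}(\wedge^i\mathbb{R}^{2n})$ simultaneously, with uniform transversality sets $\mathcal{S}^i_\varepsilon$. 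This yields the singleton fiber over the attracting point of the loxodromic generator $A$.
\item The infinite fiber over $B$'s attracting point comes from Zariski density, not from a torus orbit. Zariski density supplies limit flags with infinitely many distinct projections onto the $2$-dimensional space $V_2$ on which the limit of $\wedge^nB^r$ is supported.
\end{itemize}
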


\section{Proof of Theorem~\ref{thm:main}}

To prove Theorem~\ref{thm:main}, we will reduce to known results regarding the dynamics of Zariski-dense groups of conformal transformations of the sphere on certain bundles over the sphere. 
For $d\geq 2$, let $\mathbb{S}^d$ denote the conformal $d$-sphere. We define a {\em double-pointed complete oriented flag of spheres} in $\mathbb{S}^d$ to be a sequence $S_0 \subset S_1 \subset \cdots \subset S_{d-1} \subset \mathbb{S}^d$ of nested round spheres in $\mathbb{S}^d$ with $\dim(S_i) = i$, together with a choice of orientation for each of the~$S_i$; we will refer to the two elements of~$S_0$ as the {\em marked points} of the flag $(S_0, \ldots, S_{d-1})$. We define a {\em pointed complete oriented flag of spheres} in $\mathbb{S}^d$ similarly, but the subspace $S_0$ is replaced with a single point of $\mathbb{S}^d$ (as opposed to an ordered pair of distinct points in $\mathbb{S}^d$); we will refer to this point as the {\em marked point} of the given flag. Let $\ddot{\mathcal{F}}_d$ (respectively,~$\dot{\mathcal{F}}_d$) denote the space of all double-pointed complete oriented flags of spheres (resp., the space of all {\em pointed} complete oriented flags of spheres) in $\mathbb{S}^d$. If $P=MAN$ is a Langlands decomposition of a proper parabolic subgroup $P$ of the group $H_d$ of orientation-preserving conformal transformations of~$\mathbb{S}^d$, then $\ddot{\mathcal{F}}_d$ (respectively,~$\dot{\mathcal{F}}_d$) can be identified as a homogeneous $H_d$-space with $H_d/A$ (resp., with $H_d/(AU)$, where $U$ is any unipotent one-parameter subgroup of $N$). Under this interpretation, the ``forgetful maps'' $\ddot{\mathcal{F}}_d \rightarrow \dot{\mathcal{F}}_d$ and $\dot{\mathcal{F}}_d \rightarrow \mathbb{S}^d$ are nothing but the quotient maps $H_d/A \rightarrow H_d/(AU)$ and $H_d/(AU) \rightarrow H_d/P$, respectively.

Given a discrete subgroup $\Delta < H_d$ with limit set $\Lambda_\Delta \subset \mathbb{S}^d$, we will denote by~$\dot{\mathcal{F}}_d(\Delta)$ (respectively, by $\ddot{\mathcal{F}}_d(\Delta)$) the set of all pointed complete oriented flags of spheres in $\mathbb{S}^d$ whose marked point lies in $\Lambda_\Delta$ (resp., the set of all double-pointed complete oriented flags of spheres in $\mathbb{S}^d$ both of whose marked points lie in $\Lambda_\Delta$). We borrow the following argument from the proof of Theorem~3.1 in~\cite{MR4953215}; see also~\cite[Corollary~4.5]{MR3674219}. We repeat the argument here for the convenience of the reader.

\begin{proposition}\label{prop:spheres}
Let $\Delta$ be a Zariski-dense discrete subgroup of $H_d$. Then the image of $\ddot{\mathcal{F}}_d(\Delta)$ under the forgetful map $\ddot{\mathcal{F}}_d \rightarrow \dot{\mathcal{F}}_d$ is dense in $\dot{\mathcal{F}}_d(\Delta)$.
\end{proposition}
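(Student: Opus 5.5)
The plan is to work in the homogeneous model $\dot{\mathcal{F}}_d = H_d/(AU)$ and $\ddot{\mathcal{F}}_d = H_d/A$. Normalize so that $P = MAN$ is the stabilizer of $\infty \in \mathbb{S}^d = \mathbb{R}^d \cup \{\infty\}$, $A$ fixes $0$ and $\infty$ and acts by positive dilations, $M \cong \mathrm{SO}(d)$ acts by rotations about $0$, and $N \cong \mathbb{R}^d$ acts by translations. With the marked point of $gAU$ being $g\infty$, we get
\[
\dot{\mathcal{F}}_d(\Delta) = \{gAU : g\infty \in \Lambda_\Delta\}, \qquad \ddot{\mathcal{F}}_d(\Delta) = \{gA : g\infty,\, g0 \in \Lambda_\Delta\}.
\]
Let $\mathcal{I}$ be the image of $\ddot{\mathcal{F}}_d(\Delta)$ in $\dot{\mathcal{F}}_d$. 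Since $M$ and $A$ fix $0$ and $\infty$, the set $\ddot{\mathcal{F}}_d(\Delta)$ is right $MA$-invariant. Since $MA$ normalizes $U$ up to conjugation, I will have to keep track of which $U$ is meant; I will treat $U$ as ranging over the $M$-conjugates of a fixed one-parameter subgroup, which does not affect the identification of $\dot{\mathcal{F}}_d$. Fix a point $gAU \in \dot{\mathcal{F}}_d(\Delta)$, so $\xi := g\infty \in \Lambda_\Delta$. The goal is to show $gAU \in \overline{\mathcal{I}}$.

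The first step is to find some lift in $\mathcal{I}$ over the same marked point. Since $\Delta$ is Zariski-dense, it is non-elementary, so $\Lambda_\Delta$ has a point $\eta \neq \xi$. Choose $n_0 \in N$ with $g n_0 0 = \eta$; this is possible because $N$ acts simply transitively on $\mathbb{S}^d - \{\xi\}$ after conjugation by $g$. Then $g n_0 AU \in \mathcal{I}$. More generally, for every $x \in \mathbb{R}^d \cong N$ with $g x \in \Lambda_\Delta$ and every $m \in M$, $a \in A$, we have $g\, n_x\, m a\, AU \in \mathcal{I}$. Since $\dot{\mathcal{F}}_d$ fibers over $\mathbb{S}^d$ with fiber over $\xi$ equal to $gP/(AU) \cong g(MN)/U$ (up to the $A$-factor), it suffices to show that the elements $n_x m a$ obtained this way have right $U$-cosets that are dense in $MAN/(AU)$. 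Rotations already give all of $M$ for free, so the problem reduces to the $N$-coordinate modulo $U$.

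The second step, which I expect to be the main obstacle, is to show that the set $X := \{x \in \mathbb{R}^d : gx \in \Lambda_\Delta\}$ is rich enough to make the $N$-coordinates dense. I would exploit the rescaling symmetry. Conjugating by $a \in A$ replaces $x$ by a dilate $tx$, and conjugating by $m \in M$ rotates $x$. Both operations are available because $\mathcal{I}$ is right $MA$-invariant, so $g n_x m a\,AU = g\,(ma)\, n_{(ma)^{-1}x}\, AU$ up to the normalization of $U$. The key point is that the base point $g$ gets replaced by $g m a$, which has the same marked point $\xi$. So the same mechanism applies from the new base point, and one can compose: from $gAU \in \overline{\mathcal{I}}$ one reaches $g\,ma\,n_y$ for $y$ in the transformed set.

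The careful bookkeeping would go as follows. The marked point $\xi$ lies in $\Lambda_\Delta$, which is perfect, so $X \neq \{0\}$ accumulates at infinity in $\mathbb{R}^d$ (the points of $\Lambda_\Delta$ near $\xi$). Taking a single nonzero $x_0 \in X$, the orbit $\{t\, m x_0 : t>0,\, m \in M\}$ is all of $\mathbb{R}^d - \{0\}$. Hence the $N$-coordinates realized over $\xi$ by elements of $\mathcal{I}$, after allowing the $MA$-reparametrization of the base point, form a dense subset of $N$. The $A$-factor is harmless because it is quotiented out in $\dot{\mathcal{F}}_d$. The subtle issue is the order of the $N$ and $MA$ factors, since $\mathcal{I}$ is only a right $MA$-invariant set rather than a group orbit. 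If the direct argument is too loose, I would fall back on the standard approach in \cite{MR4953215}: use a loxodromic element $\gamma \in \Delta$ with attracting fixed point near $\xi$, together with the density of the $M$-components of loxodromic elements in $M$ for Zariski-dense $\Delta$ (Guivarc'h--Raugi/Benoist). These give the needed $M$- and $A$-equivariance of $\overline{\mathcal{I}}$, and then $\Delta$-invariance transports density from fibers over attracting fixed points, which are dense in $\Lambda_\Delta$, to every fiber.
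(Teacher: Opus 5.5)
\textbf{There is a genuine gap: the proposal tries to get density inside a single fiber, and that is false in general.}

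Your first step reduces everything to density \emph{inside the fiber} over the fixed marked point $\xi$. The second step's argument for this does not work. The identity $g\,n_x ma\,AU = g\,ma\,n_{(ma)^{-1}x}\,AU$ only rewrites the same flag in another frame over $\xi$. So the ``$MA$-reparametrization of the base point'' produces no new flags, and the fact that $\{tmx_0\}$ sweeps out $\mathbb{R}^d-\{0\}$ says nothing about flags near $gAU$.

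Concretely, put $\xi$ at $\infty$, let $X=g^{-1}\Lambda_\Delta-\{\infty\}$, and let $U$ be translation along $\mathbb{R}u$. The flags of $\mathcal{I}$ over $\xi$ are exactly those whose line $L_1$ meets $X$; for instance $g\,n_yAU\in\mathcal{I}$ if and only if $y\in X+\mathbb{R}u$.

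The fiberwise statement you are aiming for can fail. A line close to $\ell_0=y_0+\mathbb{R}u$ can meet $X$ only in a bounded region, or far out inside a thin cone around $\pm u$. Suppose $\pm u$ is not a tangent direction of $\Lambda_\Delta$ at $\xi$ and $\delta_\Delta=\dim_H\Lambda_\Delta<d-1$. Then, near $\ell_0$, the lines meeting $X$ are those meeting a fixed compact piece of $X$. That is a closed set of Hausdorff dimension at most $\delta_\Delta+d-1<2d-2$, hence nowhere dense in the space of lines.

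This situation occurs, for example, when $d\ge 3$, $\delta_\Delta<1$ (as for a Zariski-dense Schottky group with small critical exponent), and $\xi$ is the attracting fixed point of a loxodromic $\gamma$. With the repelling point at $0$, $X$ is invariant under $p\mapsto\lambda m_\gamma p$ for some $\lambda>1$. Hence the tangent directions at $\xi$ lie in $\overline{\langle m_\gamma\rangle}\cdot\{p/|p| : p\in X,\ 1\le|p|\le\lambda\}$, a set of dimension at most $\lfloor d/2\rfloor+\delta_\Delta<d-1$. This matches the paper's remark that, for $\delta_\Delta<d-1$, almost every circle through a limit point misses the rest of $\Lambda_\Delta$. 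So you cannot stay in one fiber: the approximating flags must have marked points different from $\xi$.

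\textbf{The fallback does not close the gap.} $M$ does not act on $H_d/(AU)$ on the right. So ``$M$-equivariance of $\overline{\mathcal{I}}$'' can only mean right $M$-invariance of its preimage $\widetilde{Y}\subset H_d$. But that is equivalent to the proposition itself: it gives $gN=g\bigcup_m mUm^{-1}\subseteq\widetilde{Y}$ whenever $g\infty,g0\in\Lambda_\Delta$, and these $gN$ exhaust the preimage of $\dot{\mathcal{F}}_d(\Delta)$. You give no argument deriving it from density of $M$-components of loxodromic elements. If it is instead read as density within fibers over attracting fixed points, the example above shows it fails.

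\textbf{The missing idea is the paper's.} Let $\mathcal{E}_\Delta$ be the set of unit tangent vectors at limit points along which other limit points converge tangentially.
\begin{enumerate}
\item $\mathcal{E}_\Delta$ is nonempty by compactness and is $\Delta$-invariant, so it is dense over $\Lambda_\Delta$ by Guivarc'h--Raugi minimality.
\item Approximate the first direction $v_1$ of the target flag at $x$ by vectors $u_k\in\mathcal{E}_\Delta$ based at nearby points $x_k\in\Lambda_\Delta$.
\item Use limit points $y$ approaching $x_k$ along $u_k$ to build double-pointed flags $(x_k,y,\dots)$ whose projections are close to the target.
\end{enumerate}

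Alternatively, your observation that $X$ accumulates at infinity, used correctly, yields a single direction $v$ at $\xi$ with $gN\subseteq\widetilde{Y}$ for any $g$ with $g\infty=\xi$ whose $U$-direction is $v$. Then apply Guivarc'h--Raugi minimality on the frame bundle over $\Lambda_\Delta$ to the closed, $\Delta$-invariant, right $AN$-invariant set $\{g : gN\subseteq\widetilde{Y}\}$. This set is nonempty, so it is everything, which would also finish the proof.
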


\begin{proof}
Denote by $\widetilde{\Lambda}_\Delta$ the preimage of $\Lambda_\Delta$ in the unit tangent bundle $T^1\mathbb{S}^d$ of $\mathbb{S}^d$. Given three distinct points $x, y, z \in \mathbb{S}^d$, denote by $\overline{xy}^z$ the circular arc from $x$ to $y$ excluding $z$ contained in the unique round circle through $x$, $y$, and $z$. Following~\cite{MR4953215}, given a unit tangent vector $v \in T^1\mathbb{S}^d$ based at a point $x \in \mathbb{S}^d$ and a sequence $y_j \in \mathbb{S}^d$ converging to $v$, we say $y_j$ {\em converges to $v$ tangentially} if, for some $z \in \mathbb{S}^d\smallsetminus \{x\}$, the unit tangent vector based at $x$ determined by the circular arc $\overline{xy_j}^z$ converges to $v$ as $j \rightarrow \infty$, in which case this convergence will indeed hold for any choice of $z \in \mathbb{S}^d \smallsetminus \{x\}$. Let $\mathcal{E}_\Delta$ denote the subset of $\widetilde{\Lambda}_\Delta$ consisting of all unit tangent vectors $v \in \widetilde{\Lambda}_\Delta$ such that there is a sequence in $\Lambda_\Delta$ converging to $v$ tangentially. Since $\mathcal{E}_\Delta$ is nonempty (by a compactness argument) and $\Delta$-invariant, we have by a result of Guivarc'h--Raugi~\cite[Theorem~2]{MR2339285} that $\mathcal{E}_\Delta$ is dense in $\widetilde{\Lambda}_\Delta$. 

Now let $\dot{\phi} = (x, S_1, \dots, S_{d-1}) \in \dot{\mathcal{F}}_d(\Delta)$. We exhibit a sequence in the image of $\ddot{\mathcal{F}}_d(\Delta)$ converging to $\dot{\phi}$. Since $\dot{\phi}$ is otherwise contained in this image, we may assume that the circle $S_1$ contains no point of $\Lambda_\Delta$ apart from $x$. Now let $z \in S_1 \smallsetminus \{x\}$, and let $v_1, \ldots, v_{d-1} \in T^1\mathbb{S}^d$ be a sequence of linearly independent unit tangent vectors based at $x$ that, together with the point $z$, determine the oriented flag of spheres $S_1 \subset \cdots \subset S_{d-1}$. Since $\mathcal{E}_\Delta$ is dense in $\widetilde{\Lambda}_\Delta$, we can find a sequence $u_k \in \mathcal{E}_\Delta$ converging to $v_1$ as $k \rightarrow \infty$. For each $k$, there is then a sequence $y_j^{(k)} \in \Lambda_\Delta$ converging to $u_k$ tangentially as $j \rightarrow \infty$. Denoting by $x_k \in \mathbb{S}^d$ the point at which the unit tangent vector $u_k$ is based, we can then choose $j_k$ large enough such that $y_{j_k}^{(k)} \rightarrow x$, and such that the unit tangent vector based at $x_k$ determined by the circular arc $\overline{x_ky_{j_k}^{(k)}}^z$ converges to $v_1$ as $k \rightarrow \infty$.

For $\ell = 2, \ldots, d-1$, we now choose a sequence $y_{\ell,k} \in \mathbb{S}^d$ converging to $x$ such that the unit tangent vector based at $x_k$ determined by the circular arc $\overline{x_ky_{\ell,k}}^z$ converges to $v_\ell$ as $k \rightarrow \infty$. For $k$ sufficiently large, we then have that $(x_k, y_{j_k}^{(k)}, y_{2,k}, \ldots, y_{d-1,k})$ is a $d$-tuple of pairwise distinct points of $\mathbb{S}^d$, and hence, together with the point~$z$, uniquely determines a pointed complete oriented flag of spheres $\dot{\phi}_k$ with marked point $x_k$. By construction, we then have that $\dot{\phi}_k \rightarrow \dot{\phi}$. Moreover, we have that $\dot{\phi}_k$ is in the forgetful image of $\ddot{\mathcal{F}}_d(\Delta)$ since $x_k, y_{j_k}^{(k)} \in~\Lambda_{\Delta}$.
\end{proof}

\begin{proof}[Proof~of~Theorem~\ref{thm:main}]
Since $\Gamma$ is $Q_\tau$-regular antipodal (and is not virtually cyclic by our assumption that $\Gamma \cap H$ is Zariski-dense in $H$), we have that $\Gamma$ acts minimally on~$\Lambda_\Gamma^\tau$, as is true for an arbitrary nonelementary convergence group acting on its limit set; see~\cite[Theorem~6.13]{zbMATH04021537}. Thus, to prove Theorem~\ref{thm:main}(\ref{denseorbit}), it suffices to find a flag $\varphi \in w(\Lambda_\Gamma^\tau)$ such that $\overline{\Gamma \varphi} \subset G/Q$ contains an entire fiber of the map $w(\Lambda_\Gamma^\tau) \rightarrow \Lambda_\Gamma^\tau$. Let~$\Delta_0$ be a subgroup of $\Gamma \cap H$ that is Zariski-dense and convex cocompact in $H$. We claim that there is in fact some $\varphi \in w(\Lambda_{\Delta_0}^\tau)$ such that $\overline{\Delta_0 \varphi} \subset G/Q$ contains an entire fiber of the map $w(\Lambda_{\Delta_0}^\tau) \rightarrow \Lambda_{\Delta_0}^\tau$. Indeed, we have that $w(\Lambda_{\Delta_0}^\tau) \rightarrow \Lambda_{\Delta_0}^\tau$ is a $\Delta_0$-invariant subbundle of $w(\Lambda_H^\tau) \rightarrow \Lambda_H^\tau$, and the latter may be $H$-equivariantly identified, via some surjective local isomorphism $\iota: H \rightarrow H_d$, where $d=n-1$, with the quotient map $H_d/S \rightarrow H_d/P$, where $S$ is a closed subgroup of $P$ containing $AU$ for some unipotent one-parameter subgroup $U< N$. Setting $\Delta := \iota(\Delta_0)$, it is then enough to find some $\dot{\phi}\in H_d/(AU)$ such that $\overline{\Delta\dot{\phi}} \subset H_d/(AU)$ contains an entire fiber of the quotient map $H_d/(AU) \rightarrow H_d/P$. 
Interpreting the quotient maps $H_d/A \rightarrow H_d/(AU)$ and $H_d/(AU) \rightarrow H_d/P$ as the forgetful maps $\ddot{\mathcal{F}}_d \rightarrow \dot{\mathcal{F}}_d$ and $\dot{\mathcal{F}}_d \rightarrow \mathbb{S}^d$, respectively, we have by a result of Flaminio--Spatzier~\cite[Theorem~1.5]{MR1032882} that there is some $\ddot{\phi} \in \ddot{\mathcal{F}}_d(\Delta)$ such that $\Delta\ddot{\phi}$ is dense in $\ddot{\mathcal{F}}_d(\Delta)$. We then have by Proposition~\ref{prop:spheres} that $\Delta\dot{\phi}$ is dense in $\dot{\mathcal{F}}_d(\Delta)$, where $\dot{\phi}$ is the image of $\ddot{\phi}$ under the forgetful map $\ddot{\mathcal{F}}_d \rightarrow \dot{\mathcal{F}}_d$. Since $\dot{\mathcal{F}}_d(\Delta)$ is the entire preimage of $\Lambda_{\Delta} \subset \mathbb{S}^d$ under the forgetful map $\dot{\mathcal{F}}_d \rightarrow \mathbb{S}^d$, this completes the proof of Theorem~\ref{thm:main}(\ref{denseorbit}). 

To prove Theorem~\ref{thm:main}(\ref{minimality}), we now assume that a point stabilizer for the action of~$H$ on $w(\Lambda_H^\tau)$ contains $A_HN_H$, and we argue that for {\em any} $\varphi \in w(\Lambda_\Gamma^\tau)$, the closure $\overline{\Gamma \varphi} \subset G/Q$ contains an entire fiber of the map $w(\Lambda_\Gamma^\tau) \rightarrow \Lambda_\Gamma^\tau$. Indeed, in this case, we have that $w(\Lambda_\Gamma^\tau)$ is compact, and hence so is $\overline{\Gamma \varphi}$. In particular, the image of $\overline{\Gamma \varphi}$ in $\Lambda_\Gamma^\tau$ is a nonempty closed $\Gamma$-invariant subset of the latter, and hence must be $\Lambda_\Gamma^\tau$ itself by minimality of the action of $\Gamma$ on $\Lambda_\Gamma^\tau$. We deduce that $\overline{\Gamma \varphi}$ has nonempty intersection with $w(\Lambda_{\Delta_0}^\tau)$. On the other hand, since the bundle $w(\Lambda_H^\tau) \rightarrow \Lambda_H^\tau$ may be $H$-equivariantly identified with the bundle $H/S' \rightarrow H/P_H$, where $S'$ is a closed subgroup of $P_H$ containing $A_HN_H$, it follows again from~\cite[Theorem~2]{MR2339285} that the action of $\Delta_0$ on $w(\Lambda_{\Delta_0}^\tau)$ is minimal. We conclude that $\overline{\Gamma \varphi}$ contains $w(\Lambda_{\Delta_0}^\tau)$, i.e., that $\overline{\Gamma \varphi}$ indeed contains all fibers above points in $\Lambda_{\Delta_0}^\tau$. 
\end{proof}

\begin{remark}
The appeal to Proposition~\ref{prop:spheres} in the proof of Theorem~\ref{thm:main}(\ref{denseorbit}) can be avoided if one moreover knows that the limit set $\Lambda_\Delta$ of $\Delta$ in $\mathbb{S}^d$ possesses a point $z_0$ such that any round circle in $\mathbb{S}^d$ through $z_0$ contains at least one other point of $\Lambda_\Delta$. This will be true for instance if~$\Lambda_\Delta$ is a $(d-1)$-dimensional Sierpi\'nski compactum in the sense of Cannon~\cite{MR319203}, or if $\Lambda_\Delta$ is a topological $(d-1)$-sphere (where we are still assuming that $\Delta$ is Zariski-dense in~$H_d$, so that~$\Lambda_\Delta$ cannot be a {\em round} hypersphere of $\mathbb{S}^n$). On the other hand, it was pointed out to the authors by Dongryul Kim that, by an argument similar to the proof of Lemma~2.1 in \cite{MR4956566}, given $z_0 \in \Lambda_\Delta$ and $z_1 \in \mathbb{S}^d \smallsetminus \Lambda_\Delta$, the union of all round circles in $\mathbb{S}^d$ through $z_0$ intersecting $\Lambda_\Delta \smallsetminus \{z_0\}$ has Hausdorff dimension at most $\delta_\Delta+1$, where $\delta_\Delta$ denotes the Hausdorff dimension of $\Lambda_\Delta$. In particular, if $\delta_\Delta < d-1$, then the Hausdorff dimension of such a union is $< d$, so that for every $z_0 \in \Lambda_\Delta$, almost every round circle in $\mathbb{S}^d$ through $z_0$ fails to intersect $\Lambda_\Delta$ in any point apart from~$z_0$. 
\end{remark}

\begin{remark}
We have appealed in the proof of Theorem~\ref{thm:main}, including the proof of Proposition~\ref{prop:spheres}, to a result of Guivarc'h--Raugi~\cite[Theorem~2]{{MR2339285}} to conclude that if $\Delta$ is a Zariski-dense subgroup of $H_d$, then $\Delta$ acts minimally on the preimage in~$H_d/(AN)$ (i.e., the preimage in the orthonormal frame bundle over $\mathbb{S}^d$) of the limit set $\Lambda_\Delta \subset \mathbb{S}^d = H_d/P$. To conclude the latter, one could instead again apply (to any Zariski-dense convex cocompact subgroup of $\Delta$) the ergodicity result of Flaminio--Spatzier~\cite[Theorem~1.5]{MR1032882} and argue as in the proof of \cite[Theorem~3.7]{MR1893917} using conformality of the action of $\Delta$ on $\mathbb{S}^d$. Note further that in the case $d=2$ (which is the relevant case, for instance, in the application to Corollary~\ref{rp3}) one could instead appeal directly to work of Prasad--Rapinchuk~\cite{zbMATH01918959}.
\end{remark}

\section{Proofs of Corollaries~\ref{domainexample}-\ref{rp3} and Proposition~\ref{sl3r}}

\begin{proof}[Proof~of~Corollary~\ref{domainexample}] Up to conjugating $\Gamma$ in $\mathrm{SL}_{n+1}(\mathbb{R})$, we may assume $H = \mathrm{SO}_\circ(n,1)$. We apply Theorem~\ref{thm:main} with $G = \mathrm{SL}_{n+1}(\mathbb{R})$, with $Q=P_{1, \ldots, n}$ a minimal parabolic subgroup of $G$ (i.e., the stabilizer in $G$ of a complete flag of projective subspaces of~$\mathbb{RP}^n$), and with $Q_\tau=P_{1,n}$ the stabilizer of a length-two projective flag consisting of a point in $\mathbb{RP}^n$ and a projective hyperplane in $\mathbb{RP}^n$ containing that point. In this case, the limit set $\Lambda_\Gamma^\tau$ of $\Gamma$ in $G/Q_\tau$ is the set of all length-two projective flags of the form $\{x\} \subset V_{n-1} \subset \mathbb{RP}^n$, where $x \in \partial\Omega$ and $V_{n-1}$ is the projective hyperplane tangent to $\partial \Omega$ at $x$. For $0 \leq i \leq n-1$, we define $w_i \in Q_\tau \backslash G/Q$ as follows: a complete flag of projective subspaces $\{x\} = V_0 \subset V_1 \subset \cdots \subset V_{n-1} \subset \mathbb{RP}^n$, viewed as a point in~$G/Q$, is in position $w_i$ relative to to a projective flag $\{x'\} \subset V'_{n-1} \subset \mathbb{RP}^n$, viewed as a point in~$G/Q_\tau$, where $V'_{n-1} \subset \mathbb{RP}^n$ is a projective hyperplane, if
\begin{itemize}
\item $x' \notin V_j$ for $j < i$;
\item $x' \in V_j$ for $j \geq i$; and
\item $V_{n-1} = V'_{n-1}$.
\end{itemize}
For $i \leq n-2$, a point stabilizer for the action of $H$ on $w_i(\Lambda_H^\tau)$ then contains a conjugate in $H$ of a parabolic subgroup of $\mathrm{SO}_\circ(2,1)$ (for $i=n-2$, this stabilizer is in fact commensurable to a conjugate in $H$ of a proper parabolic subgroup of~$\mathrm{SO}_\circ(2,1)$). This implies Corollary~\ref{domainexample}(\ref{cor:denseorbitdomain}). Moreover, a point stabilizer for the action of $H$ on~$w_0(\Lambda_H^\tau)$ contains (as a finite-index subgroup) a conjugate in $H$ of~$A_H N_H$ in the notation of Theorem~\ref{thm:main}, implying Corollary~\ref{domainexample}(\ref{cor:minimalitydomain}).
\end{proof}

\begin{proof}[Proof~of~Corollary~\ref{sopq}] We may assume $H = \mathrm{SO}_\circ(n,1)$. We apply Theorem~\ref{thm:main} with $G = \mathrm{SO}(n,2)$, with $Q$ a minimal parabolic subgroup of $G$ (i.e., the stabilizer in~$G$ of a complete flag of isotropic subspaces of $\mathbb{R}^{n,2}$), and with $Q_\tau$ the stabilizer of an isotropic line in $\mathbb{R}^{n,2}$. For the purposes of establishing Corollary~\ref{sopq}(\ref{cor:denseorbitsopq}), we define $w \in Q_\tau \backslash G / Q$ as follows: a complete flag of isotropic subspaces $V_1 \subset V_2 \subset \mathbb{R}^{n,2}$, viewed as a point in $G/Q = \mathcal{F}_{\mathrm{SO}(n,2)}$, is in position~$w$ relative to an isotropic line $V'_1 \subset \mathbb{R}^{n,2}$, where the latter is viewed as a point in $G/Q_\tau = \mathrm{Ein}^n $, if $V_1' \subset V_q$. A point stabilizer for the action of $H$ on $w(\Lambda_H^\tau)$ is then some conjugate in $H$ of a proper parabolic subgroup of $\mathrm{SO}_\circ(n-1,1)$. Since every $2$-dimensional isotropic subspace of~$\mathbb{R}^{n,2}$ contains an isotropic line that, viewed as a point in $\mathrm{Ein}^n$, lies in~$\Lambda_\Gamma^\tau$ (see~\cite[Corollary~1.4(2)]{arXiv:2305.15103}), the set $w(\Lambda_\Gamma^\tau)$ is open and dense in $\mathcal{F}_{\mathrm{SO}(n,2)}$, from which we deduce Corollary~\ref{sopq}(\ref{cor:denseorbitsopq}). To establish Corollary~\ref{sopq}(\ref{cor:minimalitysopq}), we instead define $w$ as follows: a complete flag of isotropic subspaces $V_1 \subset V_2 \subset \mathbb{R}^{n,2}$ is in position~$w$ relative to an isotropic line $V'_1 \subset \mathbb{R}^{n,2}$, if~$V_1' = V_1$. In the notation of Theorem~\ref{thm:main}, we have in this case that a point stabilizer for the action of $H$ on $w(\Lambda_H^\tau)$ is some conjugate in~$H$ of $M'_H A_H N_H$, where $M'_H$ is a closed subgroup of $M_H$ isomorphic to $\mathrm{SO}(n-2)$. Corollary~\ref{sopq}(\ref{cor:minimalitysopq}) follows.
 \end{proof}

\begin{proof}[Proof~of~Corollary~\ref{rp3}]
We apply Theorem~\ref{thm:main}(\ref{minimality}) with $G = \mathrm{SL}_4(\mathbb{R})$, $Q = P_1$, and $Q_\tau = P_2$, and with $w \in Q_\tau \backslash G /Q$ defined as follows: a $1$-dimensional linear subspace $V_1 \subset \mathbb{R}^4$, viewed as a point in $\mathbb{RP}^3$, is in position $w$ relative to a $2$-dimensional linear subspace $V_2 \subset \mathbb{R}^4$, where the latter is viewed as a point in $\mathcal{F}_2(\mathbb{R}^4)$, if $V_1 \subset V_2$. In this case, as already observed in the proof of~\cite[Proposition~8.2]{zbMATH07262226}, we have that $w(\Lambda) = \mathbb{RP}^3$ for any antipodal topological $2$-sphere in~$\mathcal{F}_2(\mathbb{R}^4)$. Moreover, the action of $H$ on~$\mathbb{RP}^3$ is transitive with point stabilizers conjugate within $H$ to $A_HN_H$, so Theorem~\ref{thm:main}(\ref{minimality}) indeed applies. 
\end{proof}

\begin{remark}
One can obtain Zariski-dense examples of subgroups $\Gamma < \mathrm{SL}_4(\mathbb{R})$ satisfying the assumptions of Corollary~\ref{rp3} by performing successive ``bendings'' of certain cocompact lattices in~$H$ as in~\cite[Appendix~A]{MR4990468}; that our setting is indeed treated in the latter reference can be seen by identifying $\mathrm{PSL}_4(\mathbb{R})$ with $\mathrm{SO}_\circ(3,3)$.
\end{remark}

\begin{proof}[Proof~of~Proposition~\ref{sl3r}]
Denote by $\Lambda_\Gamma^*$ the limit set of $\Gamma$ in the dual $(\mathbb{RP}^2)^* = \mathcal{F}_2(\mathbb{R}^3)$ of $\mathbb{RP}^2$, and consider the projections $\pi: \mathcal{F}(\mathbb{R}^3) \rightarrow \mathbb{RP}^2$ and $\pi_*: \mathcal{F}(\mathbb{R}^3) \rightarrow (\mathbb{RP}^2)^*$. We claim first that $\pi$ remains surjective when restricted to $\pi_*^{-1}(\Lambda_\Gamma^*)$. Indeed, to say otherwise is to say that $\Lambda_\Gamma^*$ is contained in an affine chart of $(\mathbb{RP}^2)^*$. Suppose for a contradiction that the latter holds. Since $\Lambda_\Gamma^*$ is connected, the convex hull of~$\Lambda_\Gamma^*$ is independent of the choice of affine chart containing $\Lambda_\Gamma^*$ and is thus preserved by~$\Gamma$ (see the proof of Proposition~2.8 in~\cite{zbMATH07262226}), and since $\Lambda_\Gamma^*$ is moreover a topological circle, the interior of this convex hull is nonempty. It follows that every triple of distinct flags in the limit set of $\Gamma$ in $\mathcal{F}(\mathbb{R}^3)$ is positive. On the other hand, since~$\Gamma \cap H$ is Zariski-dense in $H$, the limit set of $\Gamma \cap H$ in $\mathcal{F}(\mathbb{R}^3)$ contains at least three flags, and any triple of distinct flags in the limit set of $\Gamma \cap H$ in $\mathcal{F}(\mathbb{R}^3)$ fails to be positive. 

We conclude that the restriction of $\pi$ to $\pi_*^{-1}(\Lambda_\Gamma^*)$ indeed remains surjective. To complete the proof, it thus suffices to find a dense $\Gamma$-orbit in $\pi_*^{-1}(\Lambda_\Gamma^*)$. To that end, since $\Gamma$ acts minimally on $\Lambda_\Gamma^*$, it in turn suffices to find a flag $\varphi \in \pi_*^{-1}(\Lambda_\Gamma^*)$ such that $\overline{\Gamma \varphi}$ contains an entire fiber of $\pi_\ast$. One can indeed find such $\varphi$ in $\pi_*^{-1}(\Lambda_\Delta^*)$ for any subgroup $\Delta <\Gamma \cap H$ such that $\Delta$ is Zariski-dense and convex cocompact in~$H$, where $\Lambda_\Delta^*$ denotes the limit set of $\Delta$ in $(\mathbb{RP}^2)^*$, as it follows from Hedlund's theorem \cite{zbMATH03023567} that $\Delta$ has a dense orbit in $\pi_*^{-1}(\Lambda_\Delta^*)$.\footnote{More precisely, it follows from Hedlund's theorem that $\Delta \varphi$ is dense in $\pi_*^{-1}(\Lambda_\Delta^*)$ as soon as $\pi(\varphi)$ is neither the unique point in $\mathbb{RP}^2$ fixed by $H$ nor contained in the unique projective line in $\mathbb{RP}^2$ preserved by $H$.}
\end{proof}

\begin{example}
We provide some examples of Zariski-dense Anosov surface groups $\Gamma < \mathrm{SL}_3(\mathbb{R})$ satisfying the assumptions of Proposition~\ref{sl3r}. Consider an abstract surface group
\[
\Gamma_g := \Big\langle a_1, b_1, \ldots, a_g, b_g \ \big | \ [a_1, b_1]\cdots[a_g,b_g] \Big\rangle
\]
of genus $g \geq 2$, and let $\rho: \Gamma_g \rightarrow \mathrm{SL}_2(\mathbb{R})$ be a discrete and faithful representation. Given a $g$-tuple $v = (v_1, \ldots, v_g)$ of vectors $v_i \in \mathbb{R}^2$ and a $g$-tuple $\omega = (\omega_1, \ldots, \omega_g)$ of letters $\omega_i \in \{C,R\}$, we define a deformation $\rho_v^\omega : \Gamma_g \rightarrow \mathrm{SL}_3(\mathbb{R})$ as follows. For $i=1, \ldots, g$, in the case that $\omega_i = C$, we set
\[
\rho_v^\omega(a_i) = \begin{pmatrix} \rho(a_i) & v_i \\ & 1 \end{pmatrix}, \quad \rho_v^\omega(b_i) = \begin{pmatrix} \rho(b_i) & u_i \\ & 1 \end{pmatrix},
\]
where $u_i \in \mathbb{R}^2$ is the unique vector satisfying
\[
\left[ \begin{pmatrix} \rho(a_i) & v_i \\ & 1 \end{pmatrix}, \begin{pmatrix} \rho(b_i) & u_i \\ & 1 \end{pmatrix}  \right] = \begin{pmatrix} [\rho(a_i),\rho(b_i)] & \\ & 1 \end{pmatrix}.
\]
In the case that $\omega_i = R$, we set
\[
\rho_v^\omega(a_i) = \begin{pmatrix} \rho(a_i) & \\ v_i^T & 1 \end{pmatrix}, \quad \rho_v^\omega(b_i) = \begin{pmatrix} \rho(b_i) & \\ w_i^T &  1 \end{pmatrix},
\]
where $w_i \in \mathbb{R}^2$ is again the unique vector satisfying
\[
\left[ \begin{pmatrix} \rho(a_i) & \\ v_i^T & 1 \end{pmatrix}, \begin{pmatrix} \rho(b_i) & \\ w_i^T & 1 \end{pmatrix}  \right] = \begin{pmatrix} [\rho(a_i),\rho(b_i)] & \\ & 1 \end{pmatrix}.
\]
If each of the $v_i$ is sufficiently close to the origin in $\mathbb{R}^2$, then the representation $\rho_v^\omega$ is Anosov. If $v_i=0$ for some $i$, then $\rho_v^\omega(\Gamma_g) \cap H$ is Zariski-dense in $H$, where $H$ is the upper block-diagonal embedding of $\mathrm{SL}_2(\mathbb{R})$ in $\mathrm{SL}_3(\mathbb{R})$. If there are in addition two distinct indices $j,k$ such that $\omega_j = C$, $\omega_k = R$, and each of $v_j$ and $v_k$ is nonzero, then $\rho_v^\omega(\Gamma_g)$ is Zariski-dense in $\mathrm{SL}_3(\mathbb{R})$; indeed, in this case, we have that the Lie algebra $\mathfrak{g}$ of the Zariski-closure $G$ of $\rho_v^\omega(\Gamma_g)$ contains two matrices of the form
\[
\begin{pmatrix} \ast & x \\ 0 & 0\end{pmatrix}, \quad \begin{pmatrix} \ast & 0 \\ y^T & 0\end{pmatrix},
\]
where $x,y \in \mathbb{R}^2$ are both nonzero. On the other hand, since $\mathfrak{g}$ also contains the Lie algebra $\mathfrak{h}$ of $H$, we have that $\mathfrak{g}$ indeed contains the matrices
\[
\begin{pmatrix} 0 & x \\ 0 & 0\end{pmatrix}, \quad \begin{pmatrix} 0 & 0 \\ y^T & 0\end{pmatrix},
\]
so that $G$ nontrivially intersects each of the horospherical subgroups of $\mathrm{SL}_3(\mathbb{R})$ normalized by $H$. Since $\mathrm{SL}_2(\mathbb{R})$ acts transitively on $\mathbb{R}^2-\{0\}$, it follows that $G$ in fact contains each of those horospherical subgroups, and since any pair of opposite horospherical subgroups together generate $\mathrm{SL}_3(\mathbb{R})$, we conclude that $G = \mathrm{SL}_3(\mathbb{R})$.
\end{example}

\section{Limit sets that do not fiber}
In this section, we prove Theorem~\ref{distinctfibers}, that is, we give examples of Zariski-dense groups that are Anosov in a partial flag manifold but such that the natural projection from the Benoist--Guivarc'h limit set in the Furstenberg boundary to the Anosov limit set is not a fibration.

\begin{proof}[Proof~of~Theorem~\ref{distinctfibers}] For $1\leq i \leq 2n-1$, set $\ell_i:=\binom{2n}{i}$ and fix the basis for the exterior power $\bigwedge^i \mathbb{R}^{2n} \simeq \mathbb{R}^{\ell_i}$ consisting of the vectors \[\mathcal{V}_i:=\{e_{m_1}\wedge \cdots \wedge e_{m_i}: m_1<\cdots <m_i\leq 2n\}.\] 

For the rest of the proof, we equip $\mathbb{R}^{\ell_i}$ with the unique inner product with respect to which the basis $\mathcal{V}_i$ is orthonormal; for $v\in \mathbb{R}^{\ell_i}$ non-zero, we will denote by $v^{\perp}$ the orthogonal complement of $v$ with respect to the latter inner product. We also equip $\mathbb{P}(\mathbb{R}^{\ell_i})$ with a Riemannian metric and denote by $\mathcal{B}_{\epsilon}([v])$ the ball of radius $\epsilon>0$ centered at $[v]\in \mathbb{P}(\mathbb{R}^{\ell_i})$.

For $1\leq i \leq 2n-1$, define the rank-two matrices $J_n,L_n\in \textup{Mat}(\mathbb{R}^{\ell_n})$ as follows:\begin{align*}J_n(e_1\wedge \cdots \wedge e_{n})&= e_1\wedge \cdots \wedge e_{n},\\ J_n(e_1\wedge \cdots \wedge e_{n-1}\wedge e_{n+1})&=e_1\wedge \cdots \wedge e_{n-1}\wedge e_{n+1},\\ L_n(e_{2n}\wedge \cdots \wedge  e_{n+1})&=e_{2n}\wedge \cdots \wedge e_{n+1},\\ L_n(e_{2n}\wedge \cdots \wedge e_{n+2}\wedge e_n)&=e_{2n}\wedge \cdots \wedge e_{n+2}\wedge e_{n},\\ J_n v=0, \ v\in \mathcal{V}_n-  \{e_1\wedge \cdots& \wedge e_n, e_1\wedge \cdots\wedge e_{n-1}\wedge e_n\},\\ L_n v'=0, \ v'\in \mathcal{V}_n-  \{e_{2n}\wedge \cdots &\wedge e_n, e_{2n}\wedge \cdots\wedge e_{n+2}\wedge e_{n-1}\}.\end{align*}

We will need the following observation.

\begin{fact}\label{choice-h} There is $h\in \mathrm{GL}_{2n}(\mathbb{R})$ simultaneously \hbox{satisfying the following conditions:}
\begin{enumerate}[label=(C\arabic*)]
\item \label{it-1} $(\wedge^n  h) J_n (\wedge^n h^{-1})(e_1\wedge \cdots \wedge e_n) \notin (e_1\wedge\cdots \wedge e_n)^{\perp};$
\item \label{it-2} $(\wedge^n  h) J_n (\wedge^n h^{-1})(e_1\wedge \cdots \wedge e_n) \notin (e_n\wedge\cdots \wedge e_{2n})^{\perp};$
\item \label{it-3} $(\wedge^n  h) L_n (\wedge^n h^{-1})(e_1\wedge \cdots \wedge e_n) \notin (e_1\wedge\cdots \wedge e_n)^{\perp};$
\item \label{it-4}  $(\wedge^i  h) L_n (\wedge^i h^{-1})(e_1\wedge \cdots \wedge e_i) \notin (e_n\wedge\cdots \wedge e_{2n})^{\perp};$
\item \label{it-5} $h^{\pm 1}e_i\notin e_1^{\perp}\cup \cdots \cup  e_n^{\perp}$ for any $i=1,\ldots,2n$;
\item \label{it-6} for any $i=1,\ldots,2n-1$, each of the $i$-planes \[ h^{\pm 1}\operatorname{span}\{e_1,\ldots, e_i\}, \> h^{\pm 1}\operatorname{span}\{e_{2n},\ldots, e_{2n-i+1}\}\] is transverse to each of the $(2n-i)$-planes \[\operatorname{span}\{e_1,\ldots, e_{2n-i}\}, \> \operatorname{span}\{e_{2n},\ldots, e_{i+1}\}.\]
\end{enumerate}
\end{fact}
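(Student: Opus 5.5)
The plan is a genericity argument: each of the conditions \ref{it-1}--\ref{it-6} will be shown to cut out a nonempty Zariski-open subset of the irreducible real algebraic variety $\mathrm{GL}_{2n}(\mathbb{R})$ (or of its identity component, which is Zariski-dense). A finite intersection of nonempty Zariski-open subsets of an irreducible variety is again nonempty, and indeed open dense. So it will suffice to verify two things for each condition: that it is the complement of the zero set of a polynomial (or of a finite union of such zero sets) in the entries of $h$ and $h^{-1}$, and that some single $h$ satisfies it.

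First, openness. The maps $h\mapsto \wedge^i h$ and $h\mapsto \wedge^i h^{-1}=\wedge^i h\cdot(\det h)^{-1}$-adjugate are regular functions on $\mathrm{GL}_{2n}(\mathbb{R})$. Hence each expression $\langle (\wedge^n h)J_n(\wedge^n h^{-1})(e_1\wedge\cdots\wedge e_n),\,u\rangle$ is regular, for any fixed $u$, and the same holds with $L_n$ or with index $i$. A condition of the form ``$\notin u^\perp$'' is therefore the nonvanishing of a regular function. Likewise, \ref{it-5} is the nonvanishing of finitely many matrix entries of $h^{\pm1}$. Condition \ref{it-6} is the nonvanishing of finitely many determinants of $2n\times 2n$ matrices formed from columns of $h^{\pm1}$ together with standard basis vectors. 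So every condition is Zariski-open.

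Second, nonemptiness, which I expect to be the only real obstacle. Condition \ref{it-5} holds for $h$ with all entries of $h$ and $h^{-1}$ nonzero. Condition \ref{it-6} holds for $h$ in general position, for example a suitable generic orthogonal or Vandermonde-type matrix; each individual transversality fails only on a proper subvariety, because for any pair of subspaces of complementary dimension some element of $\mathrm{GL}_{2n}$ moves one of them transverse to the other.

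For \ref{it-1}--\ref{it-4}, conjugation by $\wedge^n h$ replaces $J_n$ by a rank-two operator $J_n^h$ with image $\wedge^n h(\mathrm{im} J_n)$ and kernel $\wedge^n h(\ker J_n)$. Since $\wedge^n h$ acts with the decomposable vectors as an orbit, I will choose $h$ so that three things hold:
\begin{itemize}
\item $\wedge^n h^{-1}(e_1\wedge\cdots\wedge e_n)$ avoids $\ker J_n$; this amounts to a nonzero Plücker coordinate of $h^{-1}\mathrm{span}\{e_1,\dots,e_n\}$ at index $\{1,\dots,n\}$ or $\{1,\dots,n-1,n+1\}$.
\item The resulting vector $J_n(\cdot)$, which is a nonzero combination of $e_1\wedge\cdots\wedge e_n$ and $e_1\wedge\cdots\wedge e_{n-1}\wedge e_{n+1}$, is sent by $\wedge^n h$ to a vector with nonzero pairing against the prescribed $u$.
\item The analogous two requirements hold for $L_n$.
\end{itemize}
Each of these is again the nonvanishing of a polynomial. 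To see that none of them vanishes identically, it suffices to exhibit one explicit $h$ for each, for instance a permutation matrix composed with a unipotent shear. Where convenient, one can also use irreducibility of $\wedge^n$ to argue that the orbit of a fixed nonzero vector under $\mathrm{GL}_{2n}$ is not contained in any hyperplane $u^\perp$.

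The intersection of these finitely many nonempty Zariski-open sets is then nonempty, which yields the desired $h$. The step I expect to need the most care is the explicit witness for \ref{it-2} and \ref{it-4}. The target $e_n\wedge\cdots\wedge e_{2n}$ pairs trivially with $e_1\wedge\cdots\wedge e_n$, so the witness $h$ must genuinely mix the two halves of the basis, and the witness should be checked there via a small $h$ acting on $\mathrm{span}\{e_{n-1},e_n,e_{n+1},e_{n+2}\}$.
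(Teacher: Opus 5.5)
Your framework is exactly the paper's: each condition is the nonvanishing of regular functions on $\mathrm{GL}_{2n}(\mathbb{R})$, so it suffices that each is nonempty, since finitely many nonempty Zariski-open sets intersect. Your treatment of (C5)--(C6) is fine.

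The gap is in (C1)--(C4). Their nonemptiness is the only nontrivial content of the Fact, and you assert it rather than show it. Moreover, the one concrete witness you point to cannot work for $n\ge 3$. Since $J_n$ has image spanned by $e_1\wedge\cdots\wedge e_{n-1}\wedge e_n$ and $e_1\wedge\cdots\wedge e_{n-1}\wedge e_{n+1}$, one always has $(\wedge^n h)J_n(\wedge^n h^{-1})(e_1\wedge\cdots\wedge e_n)=he_1\wedge\cdots\wedge he_{n-1}\wedge h(ae_n+be_{n+1})$ for some scalars $a,b$.

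Now take $h$ acting only on $\operatorname{span}\{e_{n-1},e_n,e_{n+1},e_{n+2}\}$ and fixing the other $e_i$. When $n\ge 3$ such an $h$ fixes $e_1$, so this vector has $e_1$ as a factor. It therefore pairs to zero with $e_{n+1}\wedge\cdots\wedge e_{2n}$, the intended target: the $e_n\wedge\cdots\wedge e_{2n}$ in the statement is not an $n$-vector, and the proof uses $e_{2n}\wedge\cdots\wedge e_{n+1}$. It equally pairs to zero with any other reading such as $e_n\wedge e_{n+2}\wedge\cdots\wedge e_{2n}$.

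In fact, nonvanishing of this pairing forces the $n$-plane $\operatorname{span}\{he_1,\ldots,he_{n-1},h(ae_n+be_{n+1})\}$ to be transverse to $\operatorname{span}\{e_1,\ldots,e_n\}$. So $h$ must shear every one of $e_1,\ldots,e_{n-1}$ out of the first half, which no block supported near the middle indices can do. Your fallback via irreducibility of $\wedge^n$ also does not apply as stated: $h\mapsto(\wedge^n h)J_n(\wedge^n h^{-1})(e_1\wedge\cdots\wedge e_n)$ is not the orbit map of a fixed vector.

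The missing witnesses are the paper's.

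\emph{(C1) and (C2).} Let $h_0$ be unipotent with $h_0e_i=e_i+e_{n+i}$ for $i\le n$ and $h_0e_i=e_i$ for $i>n$. Then $\wedge^n h_0^{-1}(e_1\wedge\cdots\wedge e_n)=(e_1-e_{n+1})\wedge\cdots\wedge(e_n-e_{2n})$. Its $J_n$-image is exactly $e_1\wedge\cdots\wedge e_n$, because the last factor only offers $e_n$ or $e_{2n}$. Hence $(\wedge^n h_0)J_n(\wedge^n h_0^{-1})(e_1\wedge\cdots\wedge e_n)=(e_1+e_{n+1})\wedge\cdots\wedge(e_n+e_{2n})$. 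This pairs nontrivially with both $e_1\wedge\cdots\wedge e_n$ and $e_{n+1}\wedge\cdots\wedge e_{2n}$, giving (C1) and (C2); (C1) also holds at $h=\mathrm{I}_{2n}$.

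\emph{(C3) and (C4).} Let $w_0e_i=e_{2n-i+1}$. Then $\wedge^n w_0$ is a signed permutation of the basis, so $L_n=(\wedge^n w_0)J_n(\wedge^n w_0)^{-1}$. Consequently $(\wedge^n h)L_n(\wedge^n h^{-1})=(\wedge^n (hw_0))J_n(\wedge^n (hw_0))^{-1}$, and (C3), (C4) at $h$ are exactly (C1), (C2) at $hw_0$. The witnesses are therefore $w_0$ and $h_0w_0$, a shear composed with a permutation, as you guessed. Note that $h=\mathrm{I}_{2n}$ itself fails (C3), since $L_n(e_1\wedge\cdots\wedge e_n)=0$, so the $w_0$-twist is needed for (C3) as well as (C4).
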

\begin{proof} 
It suffices to verify that each of the Zariski-open conditions \ref{it-1}-\ref{it-6} defines a nonempty subset of $\mathrm{GL}_{2n}(\mathbb{R})$. Indeed, each of these sets is then open and dense in $\mathrm{GL}_{2n}(\mathbb{R})$, so that their intersection is nonempty. 
Note that \ref{it-1} and~\ref{it-3} trivially hold when $h=\textup{I}_{2n}$, and that \ref{it-5} and \ref{it-6} clearly define nonempty subsets of $\textup{GL}_{2n}(\mathbb{R})$. To see that \ref{it-3} holds, let $h_0\in \textup{GL}_{2n}(\mathbb{R})$ be the unipotent matrix satisfying $h_0e_{i}=e_i$ when $i\geq n+1$ and $h_0e_i=e_i+e_{n+i}$ for $1\leq i \leq n$. Then \begin{align*}(\wedge^{n}h_0)J_n(\wedge^n h_0^{-1})(e_1\wedge \cdots \wedge e_{n})&=(\wedge^n h_0)J_n((e_1-e_{n+1})\wedge \cdots \wedge (e_{n}-e_{2n}))\\ &=(\wedge^n h_0)(e_1\wedge \cdots \wedge e_n)\\ &=(e_1+e_{n+1})\wedge \cdots \wedge (e_{n}+e_{2n}),\end{align*} and the latter vector is not in $(e_{2n}\wedge \cdots \wedge e_{n+1})^{\perp}$. Moreover, since $L_n$ and $J_n$ are conjugate via the matrix $\wedge^n w_0$, where $w_0e_i=e_{2n-i+1}$ for $1\leq i \leq 2n$, we have that~\ref{it-4} also is nonempty.\end{proof}

After choosing $h\in \mathrm{GL}_{2n}(\mathbb{R})$ satisfying conditions \ref{it-1}-\ref{it-6} in Fact \ref{choice-h}, we choose a matrix $$B:=h \begin{pmatrix} \textup{diag}(\mu_1,\ldots,\mu_{n-1})  & &  \\ & 1 & &  \\ & & 1 &  \\ & & & \textup{diag}\left(\frac{1}{\mu_{n-1}},\ldots,\frac{1}{\mu_1}\right) \end{pmatrix}h^{-1}$$ where $\mu_1>\cdots>\mu_n>1$ are chosen such that, for any $m\geq 1$, the Zariski-closure of the cyclic group generated by $B^m$ in $\textup{SL}_{2n}(\mathbb{R})$ is the torus \begin{align}\label{torus}\mathbb{T}:=h\big\{\textup{diag}(\theta_1,\ldots,\theta_{n-1},1,1,\theta_{n-1}^{-1},\ldots,\theta_{n}^{-1}):\theta_i\in \mathbb{R}^{\ast}\big\}h^{-1}.\end{align}

Choose also a loxodromic matrix $$A:=\begin{pmatrix} \lambda_1 & & \\ &  \ddots &  \\ & &  \lambda_{2n}\end{pmatrix}$$ where $\lambda_1>\cdots >\lambda_{2n}>0$ and $\lambda_1\cdots \lambda_{2n}=1$, and the $\lambda_i$ are chosen such that $A$ does not lie in any conjugate of either $\textup{Sp}_{2n}(\mathbb{C})$ or $\mathrm{SO}_{2n}(\mathbb{C})$ in $\mathrm{GL}_{2n}(\mathbb{C})$.

For every $i$, we have that  $\wedge^i A\in \mathrm{SL}_{\ell_i}(\mathbb{R})$ is biproximal and its attracting and repelling fixed flags in $\mathcal{F}_{1,\ell_i-1}(\mathbb{R}^{\ell_i})$ are \begin{align*}\operatorname{span}\{e_1\wedge \cdots \wedge e_i\}&\subset  (e_{2n}\wedge \cdots \wedge e_{2n-i+1})^{\perp},\\ \operatorname{span}\{e_{2n}\wedge \cdots \wedge e_{2n-i+1}\}&\subset  (e_1\wedge \cdots \wedge e_i)^{\perp}.\end{align*}

Note also that $\wedge^j B$ is biproximal for every $j=1,\ldots,n-1$. For any such $j$, by the choice of the matrix $h$ satisfying \ref{it-6}, the limit sets of the matrices $\wedge^j A$ and~$\wedge^j B$ are transverse (i.e., antipodal) in $\mathcal{F}_{1,\ell_j-1}(\mathbb{R}^{\ell_j})$, thus, by \cite{MR4002289}, there is $m_0>0$ such that for any $m\geq m_0$, the subgroup $\langle A^m,B^m\rangle < \mathrm{SL}_{2n}(\mathbb{R})$ is a rank-two free group and is $P_{1,\ldots, n-1}$-Anosov in $\mathrm{SL}_{2n}(\mathbb{R})$.

By the definition of the matrix $B$, in $\mathbb{P}(\textup{Mat}(\mathbb{R}^{\ell_n}))$ we have\begin{align*}\lim_{r\rightarrow \infty} [\wedge^n B^r]&=(\wedge^nh)J_n(\wedge^n h^{-1}),\\ \lim_{r\rightarrow \infty} [\wedge^n B^{-r}]&=(\wedge^nh)L_n(\wedge^n h^{-1}).\end{align*}

For $\varepsilon>0$ and $1\leq i \leq 2n-1$, define $$\mathcal{S}_{\varepsilon}^i:=\big\{[v]\in \mathbb{P}\left(\mathbb{R}^{\ell_i}\right):\textup{dist}([v],\mathbb{P}((e_1\wedge \cdots \wedge e_{i})^{\perp}\cup (e_{2n}\wedge \cdots \wedge e_{2n-i+1})^{\perp})\geq \varepsilon\big\}.$$
Note that there is $\varepsilon_0>0$ such that the set 
\begin{align}\label{M-i}\mathcal{M}_{\varepsilon}^{i}:=\Big\{\big(\operatorname{span}\{v_1\} \subset \cdots \subset \operatorname{span}\{v_1,\ldots,v_{2n-1}\}\big): [v_1\wedge \cdots \wedge v_i]\in \mathcal{S}_{\varepsilon}^i\Big\}\end{align} has non-empty interior for any $0<\varepsilon\leq \varepsilon_0$ and any $1\leq i \leq 2n-1$.

Using the previous relations, conditions \ref{it-1}-\ref{it-6} show that there is $0<\varepsilon\leq \varepsilon_0$ and $m_1\geq m_0$ such that for any $q\in \mathbb{Z}^{\ast}$, $m\geq m_1$, and $i=1,\ldots, 2n-1$ \begin{align}\label{pp3} (\wedge ^i B^{qm})\big(\mathcal{B}_{\varepsilon}([e_1\wedge \cdots \wedge e_i])\cup \mathcal{B}_{\varepsilon}([e_{2n}\wedge \cdots \wedge e_{2n-i+1}])\big)\subset \mathcal{S}_{\varepsilon}^{i},&\\ \label{pp4}( \wedge^i A^{qm})\mathcal{S}_{\varepsilon}^i\subset \mathcal{B}_{\varepsilon}([e_1\wedge \cdots\wedge e_i])\cup \mathcal{B}_{\varepsilon}([e_{2n}\wedge \cdots \wedge e_{2n-i+1}])& \end{align}

Now fix $m\geq m_1$ and let $\Gamma_m:=\langle A^m,B^m\rangle$. Recall by the choice of $m_1\geq m_0$ that~$\Gamma_m$ is $P_{1, \ldots, n-1}$-Anosov in $\textup{SL}_{2n}(\mathbb{R})$. We will need to verify the following claims. 

\begin{claim} The group $\Gamma_m$ is Zariski-dense in $\mathrm{SL}_{2n}(\mathbb{R})$.\end{claim}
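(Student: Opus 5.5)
Let $G$ denote the identity component of the Zariski-closure of $\Gamma_m$. We will show $G = \mathrm{SL}_{2n}(\mathbb{R})$.

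\textbf{Step 1: $G$ contains both tori.} Since $A$ is diagonal with distinct positive eigenvalues $\lambda_1>\cdots>\lambda_{2n}$, the Zariski-closure of $\langle A^m\rangle$ is an algebraic subgroup of the diagonal torus $D$. By the choice of $B$, the Zariski-closure of $\langle B^m\rangle$ is the torus $\mathbb{T}$ of (\ref{torus}). Thus $G$ contains $\mathbb{T}$ and the identity component $D_A$ of $\overline{\langle A^m\rangle}^Z$.

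\textbf{Step 2: $G$ is reductive and acts irreducibly.} Since $\Gamma_m$ is $P_1$-Anosov, it is not contained in a proper parabolic subgroup in a way compatible with unipotent radical. A cleaner route is as follows. The group $G$ is normalized by $\Gamma_m$. Suppose $G$ preserves a proper subspace $W\subset\mathbb{R}^{2n}$. Since $G\supset D_A$ contains regular elements (or at least $A^{m}$ up to finite index), $W$ is a sum of coordinate lines $e_{i}$. Since $G \supset \mathbb{T}$, $W$ is also a sum of $h$-images of eigenspaces of $\mathbb{T}$, which have dimension one except for the $2$-plane $h\,\mathrm{span}\{e_n,e_{n+1}\}$. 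Condition \ref{it-5} (that $h^{\pm1}e_i$ avoids the coordinate hyperplanes $e_1^\perp,\dots,e_n^\perp$) and condition \ref{it-6} (transversality of the $h$-translated coordinate flags to the standard ones) should rule out any such common subspace. For a $\Gamma_m$-invariant subspace the same argument applies directly, since $A^m$ and $B^m$ are semisimple with the same eigenspaces as $D$ and $\mathbb{T}$. Hence $\Gamma_m$, and then by a standard argument $G$, acts irreducibly on $\mathbb{R}^{2n}$. So $G$ is a connected reductive subgroup of $\mathrm{SL}_{2n}(\mathbb{R})$ acting irreducibly, hence semisimple modulo its center, and its center is trivial by Schur, so $G$ is semisimple.

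\textbf{Step 3: classify.} The rank of $G$ is at least $\dim\mathbb{T}=n-1$, and by the choice of $\lambda_i$ generic, $A$ alone generates a Zariski-closure of rank $2n-1$, so $\mathrm{rank}\,G=2n-1$. An irreducible semisimple subgroup of $\mathrm{SL}_{2n}$ of full rank $2n-1$ containing a regular element with distinct eigenvalues must be $\mathrm{SL}_{2n}$ itself, by the classification of maximal-rank subgroups via Borel--de Siebenthal, where the only irreducible proper candidates would be of type $\mathrm{Sp}_{2n}$ or $\mathrm{SO}_{2n}$, which have rank $n$. Alternatively, if one only knows the weaker fact that $A$ is not contained in any conjugate of $\mathrm{Sp}_{2n}(\mathbb{C})$ or $\mathrm{SO}_{2n}(\mathbb{C})$, one invokes the known list of irreducible connected subgroups of $\mathrm{SL}_N$ containing a regular real-split element: these are $\mathrm{SL}_N$, $\mathrm{Sp}_N$, $\mathrm{SO}_N$, and principal $\mathrm{SL}_2$ or $G_2$ in dimension $7$. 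Here $N=2n$ is even, which excludes $G_2$. The principal $\mathrm{SL}_2$ is excluded because $G\supset\mathbb{T}$ has rank $n-1\geq1$; for $n\ge 3$ this is immediate, and for $n=2$ one uses that $\mathbb{T}$ has eigenvalue $1$ with multiplicity two, which is impossible in the principal $\mathrm{SL}_2\subset\mathrm{SL}_4$ unless the element is trivial. The choice of $\lambda_i$ excludes $\mathrm{Sp}_{2n}$ and $\mathrm{SO}_{2n}$.

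\textbf{Expected obstacle.} The main difficulty is Step 2, namely irreducibility. It requires checking that no sum of coordinate lines coincides with a sum of $h$-eigenlines of $\mathbb{T}$ (allowing the degenerate $2$-plane), and that conditions \ref{it-5} and \ref{it-6} genuinely force this. Step 2 also has to justify the passage from irreducibility of $\Gamma_m$ to irreducibility of the identity component $G$; one handles this by noting that $G$ already contains $D_A$ and $\mathbb{T}$, so the coordinate-line argument applies to $G$ directly.
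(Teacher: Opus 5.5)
There is a genuine gap, and the more serious part is in Step~3 rather than where you flagged it.

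\textbf{Step 3.} Your first route needs $\overline{\langle A^m\rangle}$ to be a maximal torus of $\mathrm{SL}_{2n}$, i.e.\ multiplicatively independent $\lambda_i$. The paper only asks that $A$ lie in no conjugate of $\mathrm{Sp}_{2n}(\mathbb{C})$ or $\mathrm{SO}_{2n}(\mathbb{C})$. That permits, e.g., $\lambda_i=\lambda^{c_i}$ with $c_i\in\mathbb{Z}$, in which case the closure has rank one. Under your stronger choice the route does work, since an irreducible connected reductive subgroup of $\mathrm{SL}_{2n}$ containing a maximal torus is $\mathrm{SL}_{2n}$. But that changes the construction rather than proving the claim.

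Your second route cites a false classification. The list $\mathrm{SL}_N,\mathrm{Sp}_N,\mathrm{SO}_N$, principal $\mathrm{SL}_2$, $G_2$ is the list for irreducible subgroups containing a regular \emph{unipotent} element. With only a regular semisimple element there are many more. For instance, for $n\geq 3$ the group $\mathrm{SL}_2(\mathbb{R})\otimes\mathrm{SL}_n(\mathbb{R})<\mathrm{SL}_{2n}(\mathbb{R})$ is connected and irreducible, preserves no nondegenerate bilinear form, and contains $\mathrm{diag}(a,a^{-1})\otimes\mathrm{diag}(b_1,\ldots,b_n)$ with $2n$ distinct positive eigenvalues.

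What excludes such groups is $\mathbb{T}$ itself, through the one-parameter group $B_t=h\,\mathrm{diag}(t,1,\ldots,1,t^{-1})h^{-1}\in\mathbb{T}$, whose eigenvalue $1$ has multiplicity $2n-2$. No element of $\mathrm{SL}_2\otimes\mathrm{SL}_n$ with $n\geq 3$ has eigenvalues $t,1,\ldots,1,t^{-1}$ with $t\neq\pm1$. This is the missing idea. The paper feeds $B^m\in G_{\mathbb{C}}$ into \cite[Theorem~4.1]{ADLM} to conclude that the irreducible semisimple group $G_{\mathbb{C}}$ is $\mathrm{SL}_{2n}(\mathbb{C})$ or conjugate to $\mathrm{Sp}_{2n}(\mathbb{C})$ or $\mathrm{SO}_{2n}(\mathbb{C})$, and only then brings in $A$. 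You use $\mathbb{T}$ only to discard the principal $\mathrm{SL}_2$.

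\textbf{Step 2.} This step stops at its decisive point, and the conditions you point to do not suffice on their own. (C5) and (C6) allow row $n+1$ of $h$ to be supported in columns $n,n+1$, and then $e_{n+1}^{\perp}$ is invariant under both $A$ and $\mathbb{T}$.

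The paper argues directly with $B_t$. Suppose $\mathrm{span}\{e_i:i\in I\}$ is invariant, with $s\in I$ and $r\notin I$. Writing $B_t=\mathrm{I}_{2n}+(t-1)Q_t$, one gets, for all $t\neq 0,1$,
\[
0=\langle Q_te_s,e_r\rangle=\langle h^{-1}e_s,e_1\rangle\langle he_1,e_r\rangle-t^{-1}\langle h^{-1}e_s,e_{2n}\rangle\langle he_{2n},e_r\rangle .
\]
This is impossible once the relevant entries of $h$ and $h^{-1}$ are nonzero; the paper uses that all entries of $h^{\pm1}$ are nonzero, which is a Zariski-open condition.

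In your $\mathbb{T}$-language the same argument reads as follows. The projection $hE_{11}h^{-1}$ (with $E_{11}$ the first diagonal matrix unit) is a linear combination of elements of $\mathbb{T}$. So $e_s\in W$ forces $(h^{-1})_{1s}\,he_1\in W$, and under that nonvanishing condition $he_1$ has no zero coordinate. Do this over $\mathbb{C}$, since your appeal to Schur's lemma needs absolute irreducibility.
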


\begin{proof} Let $G$ (respectively,  $G_{\mathbb{C}}$) be the Zariski-closure of $\Gamma_m$ in $\textup{SL}_{2n}(\mathbb{R})$ (resp., in~$\textup{SL}_{2n}(\mathbb{C})$). Note that since $B^m\in G$, by the definition of $B$, the torus $\mathbb{T}$ (see (\ref{torus})) is contained in $G$, as is the matrix $$B_{t}:=h\begin{pmatrix} t & &\\ & \textup{I}_{2n-2} & \\ & & \frac{1}{t}\end{pmatrix}h^{-1}$$
for every $t\neq0$.

We first check that $\Gamma_m$ is an irreducible subgroup of $\mathrm{SL}_{2n}(\mathbb{C})$. 
 Indeed, suppose $V \subset \mathbb{C}^{2n}$ is a nontrivial $\Gamma_m$-invariant subspace. Then since $V$ is in particular $A^{qm}$-invariant for any $q$, we have that  $V=\operatorname{span}\{e_i:i\in I\}$ where $\textup{dim}_{\mathbb{C}}V=|I|$.
Suppose that $I$ is a proper subset of $\{1,\ldots,2n\}$. 

Denote by $\langle\cdot,\cdot\rangle$ the standard Hermitian inner product on $\mathbb{C}^{2n}$. Let $r\in \{1,\ldots,n\}\smallsetminus I$ and $s\in I$. Since $V$ is $G$-invariant and is a subspace of $e_r^{\perp}$, we have $\langle B_t e_s,e_r\rangle=0$. Note that $B_t=\textup{I}_{2n}+ (t-1) Q_t$, where $$Q_t:=h\begin{pmatrix} 1 & &\\ & 0_{2n-2} &\\ & & -\frac{1}{t} \end{pmatrix}h^{-1}.$$ Therefore, $\langle Q_te_s,e_r\rangle =0$. A straightforward computation gives that $$\langle Q_te_s,e_r\rangle=\langle h^{-1}e_s,e_1\rangle \cdot \langle he_1,e_r\rangle-\frac{1}{t}\langle h^{-1}e_s,e_n\rangle \cdot \langle he_n,e_r\rangle.$$ The latter expression however cannot be identically zero, since, by \ref{it-5}, all entries of $h$ and $h^{-1}$ are nonzero, a contradiction. Hence $V=\mathbb{C}^n$ and $\Gamma_m$ is irreducible.

Since sufficiently large powers of $A$ and $B$ are contained in a finite-index subgroup of $\Gamma_m$, the above argument shows that any finite-index subgroup  of $\Gamma_m$ acts irreducibly on $\mathbb{C}^{2n}$. In particular, (e.g., see \cite[Lemma 5.1]{ADLM}) the Zariski-closure of~$\Gamma_m$ in $\mathrm{SL}_n(\mathbb{C})$ is semisimple. As $B^m \in G_{\mathbb{C}}$, by \cite[Theorem 4.1]{ADLM}, we have that~$G_{\mathbb{C}}$ is either $\mathrm{SL}_{2n}(\mathbb{C})$ or is conjugate to either $\mathrm{Sp}_{2n}(\mathbb{C})$ or $\mathrm{SO}_{2n}(\mathbb{C})$. By our choice of the eigenvalues of $A$, we deduce that $G_{\mathbb{C}}=\mathrm{SL}_{2n}(\mathbb{C})$. This shows that $\Gamma_m$ is Zariski-dense in $\mathrm{SL}_{2n}(\mathbb{R})$.\end{proof}

Now let $\pi: \Lambda_{\Gamma_m}^{\mathcal{F}}\rightarrow \Lambda_{\Gamma_m}^{1,2n-1}$ be the $\Gamma_m$-equivariant projection.

\begin{claim} Let $a^{+}$ be the attracting fixed point of $A$ in $\mathcal{F}_{1,2n-1}(\mathbb{R}^{2n})$. Then the sole element of $\pi^{-1}(a^{+})$ is the attracting fixed point of $A$ in $\mathcal{F}(\mathbb{R}^{2n})$.\end{claim}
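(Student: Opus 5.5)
The strategy is to pin down the flag component by component. The Anosov property will determine every subspace of a flag in $\pi^{-1}(a^+)$ except the middle one $V_n$. The ping-pong relations~\eqref{pp3}--\eqref{pp4} for $i=n$ will then force $V_n=\operatorname{span}\{e_1,\ldots,e_n\}$.

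\emph{Step 1: the non-middle components.} Since $\Gamma_m$ is $P_{1,\ldots,n-1}$-Anosov, there is a continuous equivariant boundary map from $\partial\Gamma_m$ into $\mathcal{F}_{1,\ldots,n-1,n+1,\ldots,2n-1}(\mathbb{R}^{2n})$. Its image is the limit set there, and $\Lambda^{1,2n-1}_{\Gamma_m}$ is homeomorphic to $\partial\Gamma_m$. Composing, every flag $F\in\Lambda^{\mathcal{F}}_{\Gamma_m}$ with $\pi(F)=a^+$ has its components $V_j$, $j\neq n$, equal to the Anosov limit point attached to the boundary point $A^{+\infty}$ of the free group. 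That limit point is the attracting partial flag of $A$, namely $V_j=\operatorname{span}\{e_1,\ldots,e_j\}$ for $j\neq n$.

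\emph{Step 2: reduce $V_n$ to attracting points of loxodromic elements.} Since $\Gamma_m$ is Zariski-dense (previous Claim), Benoist's description applies. It says $\Lambda^{\mathcal{F}}_{\Gamma_m}$ is the closure of the attracting flags $\xi^+_\gamma$ of the loxodromic elements $\gamma\in\Gamma_m$. So write $F=\lim\xi^+_{\gamma_k}$. Then $\pi(\xi^+_{\gamma_k})\to a^+$, and $\pi(\xi^+_{\gamma_k})$ is the Anosov attracting point of $\gamma_k$, i.e.\ the image of $\gamma_k^{+\infty}\in\partial\Gamma_m$. Hence $\gamma_k^{+\infty}\to A^{+\infty}$.

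Replacing $\gamma_k$ by a suitable cyclic conjugate changes neither its attracting point in $\partial\Gamma_m$ nor the fact that $\xi^+_{\gamma_k}$ is its attracting flag. We may therefore assume $\gamma_k$ is a cyclically reduced word in $A^m,B^m$ whose infinite power $\gamma_k\gamma_k\cdots$ begins with $A^{N_km}$, with $N_k\to\infty$. Concretely, we take $\gamma_k=A^{N_km}w_k$ in one of two forms. Either $w_k$ is trivial, in which case $\xi^+_{\gamma_k}$ is the attracting flag of $A$ and we are done. Or $w_k$ begins with a nonzero power of $B^m$, ends with a positive power of $A^m$ or a power of $B^m$, and the word is cyclically reduced.

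\emph{Step 3: ping-pong in $\mathbb{P}(\wedge^n\mathbb{R}^{2n})$.} Set $\mathcal{B}:=\mathcal{B}_\varepsilon([e_1\wedge\cdots\wedge e_n])\cup\mathcal{B}_\varepsilon([e_{2n}\wedge\cdots\wedge e_{n+1}])$. By \eqref{pp3} and \eqref{pp4} with $i=n$, each syllable $B^{qm}$ maps $\mathcal{B}$ into $\mathcal{S}^n_\varepsilon$, and each syllable $A^{qm}$ maps $\mathcal{S}^n_\varepsilon$ into $\mathcal{B}$. Reading $w_k$ from the right, this gives $(\wedge^n w_k)\mathcal{B}\subset\mathcal{S}^n_\varepsilon$. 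If $w_k$ ends in an $A$-syllable we start from $\mathcal{S}^n_\varepsilon$ instead, with the same conclusion. Then $(\wedge^n\gamma_k)\mathcal{B}\subset(\wedge^nA^{N_km})\mathcal{S}^n_\varepsilon\subset\mathcal{B}$.

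Because $\wedge^n\gamma_k$ is proximal (this is what loxodromic means here) and maps the compact set $\overline{\mathcal{B}}$, which has nonempty interior, into itself, its attracting line $[\wedge^n V_n(\xi^+_{\gamma_k})]$ lies in $\overline{\mathcal{B}}$. More precisely, it equals $(\wedge^nA^{N_km})y_k$ for some $y_k\in\mathcal{S}^n_\varepsilon$.

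Now $\mathcal{S}^n_\varepsilon$ is a compact set disjoint from $\mathbb{P}((e_{2n}\wedge\cdots\wedge e_{n+1})^\perp)$, which is exactly the repelling hyperplane of the biproximal element $\wedge^nA$. Hence $(\wedge^nA^{Nm})$ converges on $\mathcal{S}^n_\varepsilon$ uniformly to $[e_1\wedge\cdots\wedge e_n]$ as $N\to\infty$. It follows that $V_n(F)=\lim V_n(\xi^+_{\gamma_k})=\operatorname{span}\{e_1,\ldots,e_n\}$. Together with Step~1, $F$ is the attracting flag of $A$.

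\emph{Main obstacle.} The delicate point is Step~2. One has to justify that the approximating loxodromic elements can be taken in the normalized form $A^{N_km}w_k$ with $N_k\to\infty$ and $w_k$ compatible with ping-pong. This requires that cyclic conjugation preserves both $\xi^+$ and the boundary attracting point, and that the Anosov homeomorphism $\partial\Gamma_m\cong\Lambda^{1,2n-1}_{\Gamma_m}$ converts $\pi(\xi^+_{\gamma_k})\to a^+$ into a statement about common prefixes. A second point to check is the identification of $\xi^+_{\gamma_k}$ with the attracting line of $\wedge^n\gamma_k$ in the middle degree. I would handle that via Benoist's theorem that Zariski-dense groups contain many loxodromic elements whose closure of attracting flags is all of $\Lambda^{\mathcal{F}}$, restricting to those $\gamma_k$ that are loxodromic in all exterior powers.
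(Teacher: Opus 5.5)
Your Steps~1 and~3 are sound, but the reduction in Step~2 rests on a false statement, and Step~3 can only be reached through it. Conjugation moves exactly the two things you want to keep. If $\gamma_k=u_kc_ku_k^{-1}$ is reduced with $c_k$ cyclically reduced, then $c_k$ has attracting point $u_k^{-1}\gamma_k^{+\infty}$ in $\partial\Gamma_m$ and attracting flag $u_k^{-1}\xi^+_{\gamma_k}$. Moreover, the elements with attracting point $\gamma_k^{+\infty}$ are the positive powers of the root of $\gamma_k$, so no cyclically reduced element shares it unless $u_k$ is trivial.

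Concretely, put $a=A^m$, $b=B^m$, and fix a loxodromic $c$ whose reduced word begins and ends with nonzero powers of $b$. Then $\gamma_k=a^kca^{-k}$ satisfies $\gamma_k^{+\infty}=a^kc^{+\infty}\to A^{+\infty}$, so sequences like this must be handled. But $\gamma_k$ is not of your form $A^{N_km}w_k$, since its first and last letters are mutually inverse. Its cyclically reduced conjugates are those of $c$, which do not depend on $k$, so none of them begins with $A^{N_km}$ with $N_k\to\infty$.

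The same problem already appears in your subcase where $w_k$ ends with a positive power of $A^m$. Starting from $\mathcal{S}^n_\varepsilon$ you only get $(\wedge^n\gamma_k)\mathcal{S}^n_\varepsilon\subset\mathcal{B}$, and $\mathcal{B}\cap\mathcal{S}^n_\varepsilon=\emptyset$. So $\mathcal{S}^n_\varepsilon$ is not invariant, and the trapping argument does not run as written.

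The only harmless conjugations are by powers of $A^m$, which fix both $a^+$ and the attracting flag of $A$ (so $F$ may be replaced by $A^{pm}F$). This is what the paper does. It conjugates $g_r$ by $A^m$ so that its reduced word begins with $A^{mk_{1r}}$, $k_{1r}\to\infty$, and ends with an $A$-syllable. It then applies (\ref{pp3})--(\ref{pp4}) not to the fixed flag but to a test flag $x_0\in\bigcap_i\mathcal{M}^i_\varepsilon$ transverse to the limit of the repelling flags of $g_r$. This gives $g_rx_0\to x$ while $(\wedge^ig_r)[v_1\wedge\cdots\wedge v_i]\in(\wedge^iA^{mk_{1r}})\mathcal{S}^i_\varepsilon$ for all $i$ simultaneously.

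If you want to keep your fixed-point trapping instead, carry the conjugator along:
\begin{enumerate}
\item Write $c_k=x_ky_k$ so that $c_k'=y_kx_k$ begins with an $A$-syllable and ends with a $B$-syllable. Your Step~3 argument places the attracting line $p_k'$ of $\wedge^nc_k'$ in $\overline{\mathcal{B}}$.
\item The attracting line of $\wedge^n\gamma_k$ equals $(\wedge^nW)p_k'$ with $W=u_kc_k^jx_k$, for every $j\geq1$.
\item For $j$ large, $W$ is a reduced prefix of $\gamma_k^{+\infty}$ whose first syllable is $A^{N_km}$ and whose last syllable is a power of $B^m$. Ping-pong along $W$ then gives $(\wedge^nW)p_k'\in(\wedge^nA^{N_km})\mathcal{S}^n_\varepsilon$, which is all your final limit needs.
\item A single-syllable $c_k$ is either a power of $A^m$, handled the same way with $W=u_k$, or a power of $B^m$, which is not loxodromic.
\end{enumerate}
With this repair your route is a genuine alternative to the paper's. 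The Anosov boundary map handles both the degrees $j\neq n$ and the divergence $N_k\to\infty$, and only the middle degree needs ping-pong.

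A minor point: the repelling hyperplane of $\wedge^nA$ is $\mathbb{P}((e_1\wedge\cdots\wedge e_n)^\perp)$, not $\mathbb{P}((e_{2n}\wedge\cdots\wedge e_{n+1})^\perp)$. This is harmless, since $\mathcal{S}^n_\varepsilon$ avoids both.
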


\begin{proof} Fix $x\in \pi^{-1}(a^{+})$. Take an infinite sequence $(g_r)_r$ of elements in $\Gamma_m$ which are loxodromic and such that $\lim_{r}x_{g_r}^{+}=x$, where $x_{g_r}^{+}$ denotes the attracting fixed flag of $g_r$ in $\mathcal{F}(\mathbb{R}^{2n})$. The attracting fixed points of $g_r$ in $\mathbb{P}(\mathbb{R}^{2n})$ converge to the line $[e_1]$. Hence,  since $\Gamma$ is $P_1$-Anosov, there is an infinite sequence of integers $(k_{1r})$ with $\lim_r k_{1r}=\infty$ such that $g_r\in \langle A^m,B^m\rangle$ has reduced form $$g_r=A^{mk_{1r}}B^{ms_{1r}}\cdots A^{mk_{\ell_r r}}B^{ms_{\ell_r r}}.$$
Indeed, to see the previous claim, note that if $(k_{1r})_r$ had a bounded subsequence, then there would be a subsequence $(g_{r_N})_N$ of $(g_r)_r$ and $p\in \mathbb{Z}$ such that $A^{-pm}g_{r_N}A^{pm}$ starts with a nontrivial power of $B^m$. However, by (\ref{pp3}) and (\ref{pp4}) for $i=1$, it follows that for any word of the form $w\in \langle A^m,B^m\rangle$ starting and ending with a nontrivial power of $B^m$, we have $[w^{\pm 1}e_1],[w^{\pm 1}e_{2n}]\in \mathcal{S}_{\varepsilon}^1$.
This shows that for any infinite sequence $(w_r)$ starting and ending with a power of $B^m$, the limit of attracting fixed points of any sequence of the form $(w_rA^{q_r m})$ in $\mathbb{P}(\mathbb{R}^{2n})$ is equal to the limit of attracting fixed points of $(w_n)$ which remains in $\mathcal{S}_{\varepsilon}^1$. This indeed shows that $\lim k_{1r}=\infty$.

Up to passing to a subsequence, up to replacing $x$ with $A^m x$, and up to replacing~$g_r$ with $A^mg_rA^{-m}$, we may assume that $g_r$ ends with a power of $A^m$, i.e., that $k_{\ell_r r}\neq 0$, $s_{\ell_r r}=0$, and  $$g_r=A^{mk_{1r}}B^{ms_{1r}}\cdots A^{mk_{\ell_r r}}.$$

Without loss of generality, we may also assume that $s_{1r}\neq 0$, since otherwise $g_r$ is a power of $A^m$ and it follows readily that $x$ is the attracting fixed point of $A$ in~$\mathcal{F}(\mathbb{R}^{2n})$. Up to passing to a further subsequence of $(g_r)_r$, we may assume that the sequence of repelling fixed flags of $(g_r)_r$ converges to a flag $F_0\in \mathcal{F}(\mathbb{R}^{2n})$. Choose a flag $x_0\in \mathcal{F}(\mathbb{R}^{2n})\cap \bigcap_{i=1}^{2n-1}\mathcal{M}_{\varepsilon}^i$ (see (\ref{M-i})), $$x_0:=\big(\operatorname{span}\{v_1\} \subset \operatorname{span}\{v_1,v_2\}\subset \cdots \subset \operatorname{span}\{v_1,\ldots,v_{2n-1}\}\big),$$ which is antipodal to $F_0$ and such that $[v_1\wedge \cdots \wedge  v_i]\in \mathcal{S}_{\varepsilon}^i$ for $i=1,\ldots,2n-1$. Such $x_0$ exists since for  fixed $\varepsilon>0$ the set $\bigcap_{i=1}^{2n-1}\mathcal{M}_{\varepsilon}^i$ has nonempty interior. As~$x_0$ is transverse to $F_0$, we have $\lim_r g_rx_0=\lim_r x_{g_r}^{+}=x$.

By using the ping-pong inclusions (\ref{pp3}) and (\ref{pp4}), for any $p,q\neq 0$, $i=1,\ldots,2n-1$, \begin{align*}\wedge^i (B^{pm}A^{qm})[v_1\wedge\cdots \wedge v_i]\in \wedge^i B^{pm}(\mathcal{B}_{\varepsilon}([e_1\wedge \cdots \wedge e_i])\cup \mathcal{B}_{\varepsilon}([e_{2n}\wedge \cdots \wedge e_{2n-i+1}]))&\subset \mathcal{S}_{\varepsilon}^i,\\ \wedge^i A^{qm}\mathcal{S}_{\varepsilon}^i\subset \mathcal{B}_{\varepsilon}([e_1\wedge\cdots \wedge e_i])\cup \mathcal{B}_{\varepsilon}([e_{2n}\wedge\cdots \wedge e_{2n-i+1}]).\end{align*} The previous inclusions imply $$(\wedge^i g_r)[v_1\wedge \cdots \wedge v_i]\in(\wedge^i A^{mk_{1r}})\mathcal{S}_{\varepsilon}^i.$$ Therefore, as $\lim_r k_{1r}=\infty$ and $(\wedge^i A^{mk_{1r}})\mathcal{S}_{\varepsilon}^i$ Hausdorff converges to $\{[e_1\wedge\cdots \wedge e_i]\}$, we have $$\lim_{r\rightarrow \infty} \wedge^i g_r [v_1\wedge\cdots  \wedge v_i]=[e_1\wedge\cdots \wedge e_i].$$ 
By the choice of $x_0\in \mathcal{F}(\mathbb{R}^{2n})$, as $\lim_r g_rx_0=x$, we have $\lim_rg_r\operatorname{span}\{v_1,\ldots, v_i\}=\operatorname{span}\{e_1,\ldots,e_i\}$ for every $1\leq i \leq 2n-1$, and thus $$x=\big(\operatorname{span}\{e_1\}\subset \operatorname{span}\{e_1,e_2\}\subset\cdots \subset  \operatorname{span}\{e_1,\ldots,e_{2n-1}\}\big).$$ This completes the proof of the claim.\end{proof}

The following claim follows from~\cite[Proposition~9.5]{zbMATH06882286}, but we include a proof for the convenience of the reader.

\begin{claim} Let $b^{+}=(\operatorname{span}\{he_1\}\subset \operatorname{span}\{he_1,\ldots,he_{2n-1}\})$ be the attracting fixed point of $B^m$ in $\mathcal{F}_{1,2n-1}(\mathbb{R}^{2n})$. Then  $\pi^{-1}(b^{+})$ is infinite.\end{claim}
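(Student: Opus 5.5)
The plan is to produce infinitely many points of $\pi^{-1}(b^+)$ as limits $\lim_{r\to\infty} B^{mr}y$ with $y\in\Lambda^{\mathcal F}_{\Gamma_m}$. The mechanism is the eigenvalue-$1$ block of $B$. In the basis $he_1,\dots,he_{2n}$, $B$ is diagonal with eigenvalues
\[
\mu_1>\cdots>\mu_{n-1}>1=1>\mu_{n-1}^{-1}>\cdots>\mu_1^{-1},
\]
and it acts as the identity on the plane $W:=\operatorname{span}\{he_n,he_{n+1}\}$. So $B$ is proximal in $\mathcal F_{1,\dots,n-1}$ and in the dual partial flags, but not in the $n$-Grassmannian. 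Since $\Lambda^{\mathcal F}_{\Gamma_m}$ is closed and $\Gamma_m$-invariant, every such limit lies in it. The projection to $\mathcal F_{1,2n-1}(\mathbb R^{2n})$ is equivariant, so $\pi(\lim_r B^{mr}y)=\lim_r B^{mr}\pi(y)$. This equals $b^+$ whenever $\pi(y)$ is in general position with respect to the repelling data of $B$.

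First I will compute the limit of $B^{mr}y$ for a flag $y=(y_1\subset\cdots\subset y_{2n-1})$ in general position with respect to the $h$-coordinate flags. For $i\le n-1$ the subspace $B^{mr}y_i$ converges to $\operatorname{span}\{he_1,\dots,he_i\}$. For $i\ge n+1$ it converges to $\operatorname{span}\{he_1,\dots,he_i\}$ as well, by the dual argument. For $i=n$, write $E_{\le k}:=\operatorname{span}\{he_1,\dots,he_k\}$. Generically $y_n\cap E_{\le n+1}$ is a line, and it has a nonzero image in $E_{\le n+1}/E_{\le n-1}\cong W$. The limit of $B^{mr}y_n$ is then $E_{\le n-1}\oplus\ell(y)$, where $\ell(y)\in\mathbb P(W)$ is that image line; the $W$-component is not contracted, because $B$ is trivial on $W$. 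This is a direct computation with the diagonal form. Thus on a nonempty Zariski-open set $\mathcal U\subset\mathcal F(\mathbb R^{2n})$ we get
\[
\lim_r B^{mr}y=(E_{\le1}\subset\cdots\subset E_{\le n-1}\subset E_{\le n-1}\oplus\ell(y)\subset E_{\le n+1}\subset\cdots),
\]
which lies over $b^+$. Moreover, $y\mapsto\ell(y)$ is a nonconstant regular map $\mathcal U\to\mathbb P(W)\cong\mathbb{RP}^1$. It is nonconstant because, for suitable $y$, one can make $y_n\cap E_{\le n+1}$ contain any prescribed vector of $W$.

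Second, I will use that $\Gamma_m$ is Zariski-dense (the previous Claim). By Benoist's results \cite{MR1437472}, $\Lambda^{\mathcal F}_{\Gamma_m}$ is then Zariski-dense in $\mathcal F(\mathbb R^{2n})$. Hence $\Lambda^{\mathcal F}_{\Gamma_m}\cap\mathcal U$ is Zariski-dense in $\mathcal F(\mathbb R^{2n})$. Each fiber of $\ell$ is a proper Zariski-closed subset of $\mathcal U$, and a Zariski-dense set cannot lie in a finite union of proper subvarieties of the irreducible variety $\mathcal F(\mathbb R^{2n})$. So $\ell$ takes infinitely many values on $\Lambda^{\mathcal F}_{\Gamma_m}\cap\mathcal U$. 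The corresponding limits $\lim_r B^{mr}y$ are pairwise distinct, since they differ in their $n$-dimensional component. They all lie in $\pi^{-1}(b^+)$, so this fiber is infinite.

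The main obstacle is making the genericity condition defining $\mathcal U$ precise. It must guarantee both the stated convergence in every Grassmannian and that $\pi(\lim B^{mr}y)=b^+$. The natural choice is to require that $y_i$ be transverse to $\operatorname{span}\{he_{i+1},\dots,he_{2n}\}$ for all $i\ne n$, and that $y_n\cap E_{\le n+1}$ be a line not contained in $E_{\le n-1}$. I must then check that this set is Zariski-open and nonempty, and verify the convergence carefully. If invoking Zariski-density of the limit set seems too heavy, an alternative is to use the attracting flags $x^+_{g}$ of loxodromic elements $g=A^{mk}B^{ms}\cdots$, whose $\ell$-values can be controlled by the ping-pong estimates \eqref{pp3}--\eqref{pp4}. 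I expect the Zariski-density route to be cleaner.
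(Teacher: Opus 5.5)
Your overall strategy is the paper's. You push generic flags of $\Lambda^{\mathcal F}_{\Gamma_m}$ forward by $B^{mr}$. Because of the eigenvalue-one block on $W$, $[\wedge^n B^{mr}]$ tends to the rank-two map $(\wedge^n h)J_n(\wedge^n h^{-1})$, so the limiting $n$-plane is $E_{\le n-1}$ plus a line of $W$ that varies with $y$. Zariski density then gives infinitely many such lines. The paper uses the orbit $\Gamma_m A^+$ and Zariski density of the subset $\mathcal S\subset\Gamma_m$ instead of Zariski density of the whole limit set. Your citation also works, since the Zariski closure of $\Lambda^{\mathcal F}_{\Gamma_m}$ is invariant under the Zariski closure $\mathrm{SL}_{2n}(\mathbb R)$ of $\Gamma_m$.

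\textbf{The error.} Your explicit formula for the limiting $n$-plane is wrong. Write $f_i=he_i$ and $F:=\operatorname{span}\{f_n,\ldots,f_{2n}\}=W\oplus\operatorname{span}\{f_{n+2},\ldots,f_{2n}\}$. Vectors of $y_n$ with nonzero $E_{\le n-1}$-component are dragged into $E_{\le n-1}$. The extra line comes from the vector of $y_n$ with no \emph{expanding} component, i.e.\ from $y_n\cap F$: its contracting part dies while its $W$-part is fixed. So $\lim_r B^{mr}y_n=E_{\le n-1}\oplus\ell'(y)$, where $\ell'(y)$ is the projection of $y_n\cap F$ to $W$ along $\operatorname{span}\{f_{n+2},\ldots,f_{2n}\}$. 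It is not the image of $y_n\cap E_{\le n+1}$, which is the vector with no contracting component. In Plücker coordinates $\ell'$ is exactly the paper's $J_n$-projection.

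\textbf{Counterexample.} Take $n=2$ and $y_2=\operatorname{span}\{f_1+f_2,\,f_3+f_4\}$. This extends to a flag in your $\mathcal U$, e.g.\ with $y_1=\mathbb R(f_1+f_2+f_3+f_4)$ and $y_3=\operatorname{span}\{f_1,f_2,f_3+f_4\}$. Your recipe predicts the limit $\operatorname{span}\{f_1,f_2\}$. In fact
\[
B^{mr}y_2=\operatorname{span}\{f_1+\mu_1^{-mr}f_2,\; f_3+\mu_1^{-mr}f_4\}\longrightarrow\operatorname{span}\{f_1,f_3\}.
\]

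\textbf{The fix.} It is painless.
\begin{enumerate}
\item Your transversality conditions for $i=n\pm1$ already force $y_n\cap F$ to be a line, because $y_{n-1}\cap F=0$.
\item That line is not contained in $\operatorname{span}\{f_{n+2},\ldots,f_{2n}\}$, because $y_{n+1}$ meets this space trivially. So the separate condition on $y_n$ can be dropped.
\item $\ell'$ maps onto $\mathbb P(W)$. Indeed, $h\,\mathrm{diag}(I_{n-1},g_0,I_{n-1})\,h^{-1}$ with $g_0\in\mathrm{GL}_2(\mathbb R)$ preserves $\mathcal U$ and satisfies $\ell'(gy)=g_0\,\ell'(y)$.
\end{enumerate}
With $\ell'$ in place of $\ell$, your Zariski-density argument goes through unchanged.
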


\begin{proof} Let $A^{+}$ (respectively, $A^{-}$) be the attracting (resp., repelling) fixed points of~$A^m$ in $\mathcal{F}(\mathbb{R}^{2n})$. Let $\mathcal{S}$ be the subset of $\Gamma_m$ consisting of all $g\in \Gamma_m$ such that $h^{-1}gA^{+}$ is antipodal to $A^{+}$ and $A^{-}$.

Set $V_1:=\textup{Ker}J_n$ and $$V_2 :=\big\{e_1\wedge \cdots \wedge e_{n-1}\wedge (xe_{n}+ye_{n+1}):x,y\in \mathbb{R}\big\}\simeq \mathbb{R}e_{n}\oplus \mathbb{R}e_{n+1}.$$ We have the decomposition $\bigwedge^n \mathbb{R}^{2n}=V_1\oplus V_2$ and recall that $J_n:\bigwedge^n \mathbb{R}^{2n}\rightarrow V_2$ is the natural projection onto $V_2$. 

Since $\mathcal{S}$ is Zariski-dense in $\textup{SL}_{2n}(\mathbb{R})$, the set $$\big\{h^{-1}ge_1\wedge \cdots \wedge h^{-1}ge_{n}:g\in \mathcal{A}\big\}$$ has infinite projection on $V_2$. Indeed, if the latter set had finite projection on $\mathbb{P}(V_2)$, there would exist nonzero $(x_1,y_1),\ldots,(x_{q},y_q)\in \mathbb{R}^2$ such that $\mathcal{S}$ is contained in $$\bigcup_{i=1}^{q}\Big\{g \in \textup{SL}_{2n}(\mathbb{R}):  h^{-1}ge_1\wedge \cdots \wedge h^{-1}ge_n\in (e_1\wedge \cdots \wedge e_{n-1}\wedge (x_ie_{n}+y_ie_{n+1}))^{\perp}\Big\},$$ 
but the latter union is a proper algebraic subset of $\mathrm{SL}_{2n}(\mathbb{R})$.

It follows that there is an infinite sequence $(w_r)_r$ in $\mathcal{S}$ such that $h^{-1}w_rA^{+}$ is transverse to $A^{\pm}$ and the collection of projections $\{[\omega_r]\}_r\subset \mathbb{P}(\mathbb{R}e_n\oplus \mathbb{R}e_{n+1})$, $\omega_r\neq 0$, of the $h^{-1}w_re_1\wedge \cdots \wedge h^{-1}w_re_n$ onto $\mathbb{P}(V_2)\simeq \mathbb{P}(\mathbb{R}e_{n}\oplus \mathbb{R}e_{n+1})$ is infinite.

For every $r$, since $h^{-1}w_rA^{+}\in \Lambda_{\Gamma_m}^{\mathcal{F}}$ is antipodal to $A^{-}$, we have for any $j\neq n$, $$\lim_{p\rightarrow \infty}(\wedge^j B^{mp})[w_re_1\wedge \cdots \wedge w_re_j]=[he_1\wedge \cdots \wedge he_j].$$  Since $\lim_{p}[\wedge^n B^{mp}]=(\wedge^n h)J_n(\wedge^n h^{-1})$, for every $r\in \mathbb{N}$, it follows that $$\lim_{p\rightarrow \infty}[\wedge^n B^{mp}(w_re_1\wedge \cdots \wedge w_re_n)]=(\wedge^n h)[e_1\wedge\cdots \wedge e_{n-1}\wedge \omega_r].$$ In other words, it follows that $\pi^{-1}(b^+)$ contains the infinite set of flags \begin{align*}\Big\{\big(\operatorname{span}\{he_1\}\subset \cdots &\subset \operatorname{span}\{he_1,\ldots,he_{n-1},h\omega_r\}\subset\cdots \subset \operatorname{span}\{he_1,\ldots,he_{2n-1}\}\big)\Big\}_r,\end{align*}
and the claim is proved.\end{proof}

We conclude that the rank-two free subgroup $\Gamma:= \Gamma_m$ of $\textup{SL}_{2n}(\mathbb{R})$ is as desired in Theorem~\ref{distinctfibers}.\end{proof}

\begin{remark}
We remark that the properties of the limit set $\Lambda_\Gamma^\mathcal{F}$ of the examples~$\Gamma$ in the proof of Theorem~\ref{distinctfibers} are somewhat delicate. 
For example, for $n=2$, it is not difficult to see that the subgroup $\Gamma < \mathrm{SL}_4(\mathbb{R})$ constructed in the proof of Theorem~\ref{distinctfibers} can be approximated by subgroups $\Gamma' < \mathrm{SL}_4(\mathbb{R})$ such that $\Lambda_{\Gamma'}^\mathcal{F}$ is the entire preimage of~$\Lambda_{\Gamma'}^{1,3}$ under the projection $\mathcal{F}(\mathbb{R}^4) \rightarrow \mathcal{F}_{1,3}(\mathbb{R}^d)$. Moreover, one can also approximate~$\Gamma$ by $P_{1,2}$-Anosov subgroups $\Gamma'' < \mathrm{SL}_4(\mathbb{R})$ as in~\cite[Corollary~5.5]{arXiv:2603.25098}, for which the $\Gamma''$-equivariant projection $\Lambda_{\Gamma''}^\mathcal{F} \rightarrow \Lambda_{\Gamma''}^{1,3}$ is injective. 
\end{remark}

\subsection*{Acknowledgements} We thank Dongryul Kim, Fran\c{c}ois Labourie, Giuseppe Martone, Yosuke Morita, and Franco Vargas Pallete for helpful discussions.
We also thank Joaqu\'in Lema for suggesting to us the question as to whether Anosov groups as in Theorem~\ref{distinctfibers} exist. This work is based on discussions held: during a visit of the first-named author to the Institut des Hautes \'Etudes Scientifiques in March 2025; while all three authors were in residence at the Simons Laufer Mathematical Sciences Institute in Berkeley, California, during the Spring 2026 semester (National Science Foundation Grant No. DMS2424139); and during a visit of the authors to the Lodha Mathematical Sciences Institute in August-September 2026. We thank all three of these institutions for their hospitality. 

The mathematical content presented here was obtained entirely by the authors without the use of AI, nor was any form of AI used in the preparation of this manuscript.
\bibliography{minimalbib}{}
\bibliographystyle{siam}

\end{document}